\newtheorem{thm}{Theorem}[section]
\newtheorem{lem}[thm]{Lemma}
\theoremstyle{definition}
\newtheorem{defin}[thm]{Definition}
\newtheorem{rem}[thm]{Remark}
\newtheorem{exa}[thm]{Example}
\numberwithin{equation}{section}
\begin{document}

\baselineskip=17pt

\title{Spectral Factorization of Trigonometric Polynomials and Lattice Geometry}

\author{Wayne Lawton\\
Department of Mathematics\\
Mahidol University, Bangkok, Thailand \\
and School of Mathematics and Statistics\\
University of Western Australia, Perth, Australia\\
E-mail: wayne.lawton@uwa.edu.au}

\date{}

\maketitle


\renewcommand{\thefootnote}{}

\footnote{2010 \emph{Mathematics Subject Classification}: Primary 11P21; Secondary 42B05.}

\footnote{\emph{Key words and phrases}: Fej\'{e}r-Riesz spectral factorization, Hilbert kernel,
Wiener algebra, Arens-Royden theorem,
modular group, amply approximable}

\renewcommand{\thefootnote}{\arabic{footnote}}
\setcounter{footnote}{0}


\begin{abstract}
We formulate a conjecture concerning spectral factorization
of a class of trigonometric polynomials of two variables
and prove it for special cases. Our method uses relations
between the distribution of values of a polynomial of two variables
and the distributions of values of an associated family of polynomials of
one variable. We suggest an approach to prove the full conjecture
using relations between the distribution of values and the distribution
of roots of polynomials.
\end{abstract}
\section{Introduction}
The Fej\'{e}r-Riesz spectral factorization lemma (\cite{Riesz-Nagy}, p. 117),
conjectured by Fej\'{e}r \cite{Fejer} and proved by Riesz \cite{Riesz}, shows
that if $F = \{0,...,n\}$ then every nonnegative trigonometric polynomial
of one variable having frequencies in the difference set $F - F =  \{-n,...,n\}$ equals the squared modulus of a trigonometric polynomial having frequencies in $F.$ This result, and partial extensions to trigonometric polynomials of several variables, has an enormous range of applications in
analysis and engineering. It also has significant, though less acknowledged, connections with number theory. We review one of these. If $q$ is a monic polynomial with all roots having moduli $\geq 1,$ then
%
%
Jensen's theorem implies that the product of the moduli of the roots of $q$ equals the geometric mean of $p = |q|^2$ over the circle. This quantity and its extension to several variables, called the Mahler measure or height of a polynomial, has applications to Lehmer's problem \cite{Lehmer}, transcendental numbers \cite{Waldschmidt}, and algebraic dynamics \cite{Everest}. Boyd \cite{Boyd} and myself \cite{Lawton1} conjectured that heights of polynomials of several variables were limits of heights of associated polynomials of one variable and showed that the validity of the conjecture
would provide an alternative proof of a special case of Lehmer's problem proved by Dobrowolski, Lawton and Schinzel in \cite{Dobrowolski}. In \cite{Lawton2} I derived an upper bound for the measure of the set where a monic polynomial of one variable can have a small modulus and used it to prove this conjecture. In \cite{Schinzel} Schinzel used this bound to derive an inequality for the Mahler measure of polynomials in many variables.
\\ \\
In this paper we also use the relationships between distributions of values of polynomials in two variables and associated families of polynomials in one variable. Consider the set of lattice points $F$ in the open planar region bounded by lines $y = \alpha x \pm \beta$ and let $\mathfrak{U}_2(F)$ be the set of trigonometric polynomials of variables $x$ and $y$ that are uniform limits of squared
moduli of trigonometric polynomials whose frequencies are contained in $F.$ The trigonometric polynomials in $\mathfrak{U}_2(F)$ are nonnegative and their frequencies are contained in the difference set $F - F.$ We conjecture that the converse holds. The main results in this paper, derived in Section 3, prove our conjecture for the three cases where $\alpha$ is zero, rational, or can approximated by rational numbers with sufficient accuracy.
The case $\alpha = 0$ is solved by applying Fej\'{e}r-Riesz spectral factorization
to a trigonometric polynomial $P(x,y)$ regarded as a polynomial in $y$ with coefficients that are functions of $x$ and then approximating the spectral factors of $P$ by trigonometric polynomials. The case where $\alpha$ is rational is reduced to the case $\alpha = 0$ by transforming $P$ using an element in the modular group $SL(2,\mathbb{Z}).$ When $\alpha$ is irrational we approximate it by rational numbers but our proof only works for values of $\alpha$ that can be approximated sufficiently rapidly by rationals. We suggest approaches to strengthen these results.
\section{Notation and Preliminary Results}
$\mathbb{Z}_{+}, \mathbb{Z}, \mathbb{Q}, \mathbb{R}, \mathbb{C}$ denote the
nonnegative integer, integer, rational,
real, and complex numbers,
$T = \mathbb{R}/\mathbb{Z}$ and $T_c = \{z\in \mathbb{C}:|z|=1\}$
%
%
are the real and complex circle groups.
For $j \in \mathbb{Z}$ we define $e_j : T \rightarrow T_c$ by
$e_j(x) = e^{2\pi i j x}.$
For $z = x+iy, x,y \in \mathbb{R},$ $\Re\, z = x$ and $\Im\, z = y.$
For $z = re^{i\theta}$ with $r >0$ and $\theta \in (-\pi,\pi],$
$\log (z) = \log (r)+ i \theta$ and
$\sqrt z = {\sqrt r} e^{i\theta/2}.$
For $p \in [1,\infty) \cup \{\infty\},$ $\ell^p(\mathbb{Z}), L^{p}(T)$ are the Banach spaces
with norms $||f||_{p}.$
If $f \in L^1(T)$ and $h \in L^{p}(T)$ then their \emph{convolution}
$(f*h)(x) = \int_{y\in T} f(y)h(x-y)dx$
is in $L^{p}(T)$ and
$||f*h||_p \leq ||f||_1\, ||h||_p.$
For $f \in L^{1}(T)$ the \emph{Fourier transform}
${\widehat f} : \mathbb{Z} \rightarrow \mathbb{C}$
is defined by
${\widehat f}\, (j) = \int_{x \in T} f(x) \, e_{-j}(x) \, dx.$
If $f, h \in L^1(T)$ then
$\widehat {f*h} = {\widehat f}\ {\widehat h}.$
The set
$\hbox{freq}(f) = \hbox{support}(\widehat f)$
satisfies $\hbox{freq}(|f|^2) \subseteq \hbox{freq}(f) - \hbox{freq}(f).$
\begin{lem}
\label{kernels}
For $N \geq 0$ the
\emph{Dirichlet} kernel
$D_N = \sum_{j=-N}^{N} e_j,$
\emph{Hilbert} kernel
$H_N = \sum_{j=1}^{N} (e_j - e_{-j}),$
and \emph{analytic} kernel
$A_N^{\pm} = \frac{1}{2}(D_N \pm H_N)$
satisfy
$$
    ||D_N||_{1} \leq 1 + \log (2N+1), \ \
||A_N^{\pm}||_{1} \leq \frac{3}{2} + \log (N),
$$
$$
    ||1/2 + A_N^{\pm}||_{1} \leq 1 + \log (N+1), \ \mbox{and} \
    ||H_N||_{1} \leq 1 + 2\log (N).
$$
\end{lem}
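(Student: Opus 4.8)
The plan is to derive each inequality by (i) writing the kernel in closed form as a ratio of sines, or equivalently reducing it to one of the standard kernels via an elementary identity, (ii) bounding that expression pointwise in two complementary ways on $|x| \le \tfrac12$, and (iii) estimating $\|\cdot\|_1 = 2\int_0^{1/2}|\cdot|\,dx$ by splitting the range of integration at a well-chosen threshold.

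First I would record the identities that collapse the four estimates into essentially one computation. Summing geometric series gives $D_N(x) = \sin((2N+1)\pi x)/\sin(\pi x)$ and $\sum_{j=0}^N e_j(x) = e^{\pi i N x}\sin((N+1)\pi x)/\sin(\pi x)$; multiplying $A_N^{+}$ by $e^{2\pi i x}-1$ and telescoping shows $A_N^{+} = \tfrac12 + \sum_{j=1}^N e_j$, hence $\tfrac12 + A_N^{\pm} = \sum_{j=0}^N e_{\pm j}$, $A_N^{-} = \overline{A_N^{+}}$, and $H_N = A_N^{+} - A_N^{-} = 2i\,\Im A_N^{+}$, so in particular $|H_N| \le 2|A_N^{\pm}|$. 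Thus it suffices to estimate $\|D_N\|_1$, $\|\tfrac12 + A_N^{\pm}\|_1$, and $\|A_N^{\pm}\|_1$ directly, and to read off the bound on $\|H_N\|_1$ afterward.

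For each of these I would combine a \emph{trivial} pointwise bound (sum of moduli of exponentials: $|D_N| \le 2N+1$, $|\tfrac12 + A_N^{\pm}| \le N+1$, $|A_N^{\pm}| \le N+\tfrac12$) with a \emph{ratio} bound coming from the closed form together with the inequality $|\sin\pi x| \ge 2|x|$ valid on $[-\tfrac12,\tfrac12]$ (a consequence of the concavity of $\sin$ on $[0,\pi]$): this gives $|D_N(x)|,\ |\tfrac12 + A_N^{\pm}(x)| \le 1/(2|x|)$, while for the analytic kernel the telescoped numerator $e^{2\pi i(N+1)x} - \tfrac12 e^{\pm 2\pi i x} - \tfrac12$ has modulus at most $1 + |\cos\pi x|$, so for $0<|x|\le\tfrac12$ one gets $|A_N^{\pm}(x)| \le (1+|\cos\pi x|)/(2|\sin\pi x|) = \tfrac12\cot(\pi|x|/2) \le 1/(\pi|x|)$. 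Writing $\|f\|_1 = 2\int_0^{1/2}|f|\,dx$, using the trivial bound on $[0,t]$ and the ratio bound on $[t,\tfrac12]$, and choosing $t$ to balance the two contributions, one obtains $\|D_N\|_1 \le 1 + \log(2N+1)$ (with $t = 1/(2(2N+1))$), $\|\tfrac12 + A_N^{\pm}\|_1 \le 1 + \log(N+1)$ (with $t = 1/(2(N+1))$), $\|A_N^{\pm}\|_1 \le \tfrac32 + \log N$ (with $t = 2/(\pi(2N+1))$), and finally $\|H_N\|_1 \le 1 + 2\log N$ from $|H_N| \le \min(2N,\, 2/(\pi|x|))$ by the same balancing.

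I expect the only real difficulty to be numerical rather than structural: the crude ratio bound $1/(2|x|)$ for $A_N^{\pm}$ yields merely $\tfrac32 + \log(N+1)$, so one must use the sharper half-angle estimate $\tfrac12\cot(\pi|x|/2) \le 1/(\pi|x|)$ and then verify the elementary inequality $\tfrac{2}{\pi}\bigl(1 + \log\tfrac{\pi(2N+1)}{4}\bigr) \le \tfrac32 + \log N$ for $N \ge 1$ by checking $N = 1$ and differentiating; likewise the Hilbert-kernel estimate needs the same sharper pointwise bound, with a direct evaluation for the smallest values of $N$. Everything else — the geometric-series closed forms, the telescoping identity for $A_N^{\pm}$, the sign conventions in the half-angle manipulation, and the single split-and-optimize integral — is routine.
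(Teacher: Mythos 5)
Your method — closed forms from geometric series, pointwise bounds of the type $\min(\text{linear in }N,\ c/|x|)$, and a split integral balanced at the crossover — is the standard derivation of Lebesgue-constant estimates, and the paper's one-line proof simply defers to exactly such a computation in Powell's Section 16.2, so you are supplying the details the paper cites rather than taking a different route. Your identities $A_N^{+}=\tfrac12+\sum_{j=1}^N e_j$, $\tfrac12+A_N^{\pm}=\sum_{j=0}^N e_{\pm j}$, $H_N=2i\,\Im A_N^{+}$, the pointwise bounds $|D_N|,\ |\tfrac12+A_N^{\pm}|\le 1/(2|x|)$ and $|A_N^{\pm}|\le\tfrac12\cot(\pi|x|/2)\le 1/(\pi|x|)$, and the split-and-optimize computations for $D_N$, $\tfrac12+A_N^{\pm}$, and $A_N^{\pm}$ all check out, including the monotonicity argument reducing $\tfrac{2}{\pi}\bigl(1+\log\tfrac{\pi(2N+1)}{4}\bigr)\le\tfrac32+\log N$ to the case $N=1$.

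The one place where your promised ``direct evaluation for the smallest values of $N$'' must actually be carried out is the Hilbert kernel, and doing so exposes a problem — in the lemma, not in your method. One computes $\|H_1\|_1=\|2\sin(2\pi x)\|_1=4/\pi\approx 1.273$, which exceeds $1+2\log 1=1$, so the stated inequality $\|H_N\|_1\le 1+2\log N$ is false at $N=1$ (and degenerate at $N=0$). Your optimized estimate $\tfrac{4}{\pi}\bigl(1+\log\tfrac{\pi N}{2}\bigr)$, obtained with $t=1/(\pi N)$, is correct and asymptotically sharper than $1+2\log N$ since $4/\pi<2$, but it only falls below the claimed bound around $N\ge 4$; the direct checks at $N=2,3$ pass, the one at $N=1$ does not, and no sharpening of your pointwise bound can rescue it because the inequality itself is wrong there. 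So your proof is sound on the range where the lemma is true, but the closing sentence waves away a small-$N$ verification that in fact fails; you should either restrict the $H_N$ estimate to $N\ge 2$ or flag the error in the lemma's stated constant.
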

\begin{proof}
Follows from the derivations in {\cite[Section 16.2]{Powell}}.
\end{proof}
\noindent For a topological space $X,$ $C(X)$ denotes the Banach algebra of bounded
continuous functions $f:X\rightarrow \mathbb{C}$ with norm
$||f||_{\infty} = \sup \, \{\, |f(x)| : x \in X\, \}.$
The \emph{Wiener algebra}
$A(T) = \{ f \in C(T) : \widehat f \in \ell^1(\mathbb{Z}) \}$
and its subalgebras
$A^{\pm}(T) = \{\, f \in A(T) \, : \, \hbox{freq}(f) \subseteq \pm \mathbb{Z}_+ \, \}$
are Banach algebras with norm
$||f||_{A(T)} = ||\widehat f||_1$
so they are closed under exponentiation.
\begin{lem}
\label{truncate}
If $f \in A^{\pm}(T)$ and $0 \leq n \leq N$ then
$$
    (1/2+A_{n}^{\pm})*\exp(f) = (1/2+A_{n}^{\pm})*\exp((1/2+A_{N}^{\pm})*f).
$$
\end{lem}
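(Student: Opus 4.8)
The plan is to first make the kernels explicit. Summing the definitions in Lemma~\ref{kernels} gives $D_N + H_N = e_0 + 2\sum_{j=1}^{N} e_j$, so that
$1/2 + A_N^{+} = \tfrac12(1 + D_N + H_N) = \sum_{j=0}^{N} e_j$,
and symmetrically $1/2 + A_N^{-} = \sum_{j=-N}^{0} e_j$. Hence convolution by $1/2+A_N^{\pm}$ acts on the Fourier side as multiplication by the indicator of the block $\{0,1,\dots,N\}$ (resp. $\{-N,\dots,0\}$); i.e. it truncates a function's Fourier expansion to that block. In particular, for $0 \le n \le N$ one has $(1/2+A_n^{\pm})*(1/2+A_N^{\pm}) = 1/2+A_n^{\pm}$, and $(1/2+A_n^{\pm})*u$ is determined by the coefficients $\widehat u(j)$ with $|j|\le n$. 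By conjugation symmetry, $\overline{e_j}=e_{-j}$ gives $\overline{A_n^{-}}=A_n^{+}$, while $\overline{\exp u}=\exp\overline u$ and $\overline{u*v}=\overline u*\overline v$; so replacing $f$ by $\overline f$ reduces the $-$ case to the $+$ case, and it suffices to treat $A^{+}(T)$.

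Next I would split $f$ along the truncation. Set $g = (1/2+A_N^{+})*f$, so $g \in A^{+}(T)$ with $\hbox{freq}(g)\subseteq\{0,\dots,N\}$, and put $h = f - g$, so $h \in A^{+}(T)$ with $\hbox{freq}(h)\subseteq\{N+1,N+2,\dots\}$. Since $A^{+}(T)$ is a Banach algebra closed under $\exp$ and $f=g+h$ with $gh=hg$, the exponential series converges in $\|\cdot\|_{A(T)}$ and $\exp(f) = \exp(g)\exp(h)$ in $A^{+}(T)$.

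The heart of the argument is frequency bookkeeping for $\exp(h)$. Because $\mathbb{Z}_{+}$ is closed under addition and $\hbox{freq}(uv)\subseteq\hbox{freq}(u)+\hbox{freq}(v)$, every power $h^k$ with $k\ge 1$ has $\hbox{freq}(h^k)\subseteq\{k(N+1),k(N+1)+1,\dots\}\subseteq\{N+1,N+2,\dots\}$, whence $\hbox{freq}(\exp(h)-1)\subseteq\{N+1,N+2,\dots\}$. Multiplying by $\exp(g)$, whose frequencies lie in $\mathbb{Z}_{+}$, gives $\hbox{freq}\big(\exp(g)(\exp(h)-1)\big)\subseteq\{N+1,N+2,\dots\}$. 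Thus $\exp(f)-\exp(g) = \exp(g)(\exp(h)-1)$ has no frequency in $\{0,\dots,N\}\supseteq\{0,\dots,n\}$, so convolving with $1/2+A_n^{+}$ kills it, and
$(1/2+A_n^{+})*\exp(f) = (1/2+A_n^{+})*\exp(g) = (1/2+A_n^{+})*\exp\big((1/2+A_N^{+})*f\big)$,
which is the asserted identity.

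I do not expect a genuine obstacle here; the only point requiring care — rather than a calculation — is the passage from per-power frequency estimates to the estimate for the infinite series, namely that $\hbox{freq}$ of an $A(T)$-convergent sum lies in the closure (here already the union) of the frequency sets of the summands, so that $\hbox{freq}(\exp(h)-1)\subseteq\{N+1,N+2,\dots\}$ follows from the inclusions for the individual $h^k$. This, together with the identity $\exp(g+h)=\exp(g)\exp(h)$ in $A^{+}(T)$, is a routine consequence of the continuity of $u\mapsto\widehat u(j)$ on $A(T)$, but it is where the write-up should be explicit.
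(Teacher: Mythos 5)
Your proof is correct and follows essentially the same route as the paper's: split $f$ into its $(1/2+A_N^{\pm})$-truncation $g$ and the tail $h$, observe that $\exp(h)-1$ has all frequencies beyond $N$ (the paper writes this as $\exp(f-(1/2+A_N^{\pm})*f)=1+\sum_{j>N}c(j)e_{\pm j}$), note that multiplying by $\exp(g)$ preserves this, and then let the projection $1/2+A_n^{\pm}$ annihilate the tail. Your write-up is more explicit — in particular the verification that $1/2+A_N^{\pm}=\sum_{j=0}^{N}e_{\pm j}$, the reduction of the $-$ case to the $+$ case by conjugation, and the remark about passing frequency-support bounds through an $A(T)$-convergent series — but the underlying argument is the same.
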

\begin{proof}
$\exp(f - (1/2+A_{N}^{\pm})*f) = 1 + \sum_{j=N+1}^{\infty} c(j) \, e_{\pm j}$ with $c \in \ell^1(\mathbb{Z})$
so
$\exp(f) = \exp((1/2+A_{N}^{\pm})*f) + \sum_{j=N+1}^{\infty} d(j) \, e_{\pm j}$ with $d \in \ell^1(\mathbb{Z})$
and hence Lemma \ref{truncate} follows since $(1/2+A_{n}^{\pm})*e_j = \chi_{\pm \{0,1,...,N\}}(j) \, e_j.$
\end{proof}
\noindent If $B$ is a commutative Banach algebra $B$ then
$\exp(B) = \{\exp(b):b \in B\}$
is an open and closed subgroup of the group
$\hbox{Inv}(B)$
of invertible elements.
For
$f \in \hbox{Inv}(C(T))$
we define the \emph{winding number}
$W(f) \in \mathbb{Z}$ by
$$
    W(f) = \lim_{M \rightarrow \infty} \sum_{j=0}^{M-1} \log \left[ \frac{f((j+1)/M)}{f(j/M)} \right].
$$
%
%
\begin{lem}
\label{WAR}
For $f \in \hbox{Inv}(C(T))$ with $W(f) = 0$  define $L(f) \in C(T)$ by
$$
    L(f)(x) = c + \lim_{M \rightarrow \infty} \sum_{j=0}^{M-1} \log \left[ \frac{f((j+1)x/M)}{f(jx/M)}\right]
$$
where $\exp(c) = f(0).$ Then $L(f)$ is unique up to addition by an element in $2\pi i\mathbb{Z},$
$\exp(L(f)) = f,$ and
$\exp(C(T)) = \{f \in Inv(C(T)): W(f) = 0\}.$
$Inv(A(T)) = A(T) \cap Inv(C(T))$ and
$\exp(A(T)) = A(T) \cap \exp(C(T)).$
\end{lem}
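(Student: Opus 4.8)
The plan is to establish the statements about $C(T)$ directly, by making the logarithm construction converge, and then to bootstrap to $A(T)$ using Wiener's lemma together with a density argument.

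Fix $f \in \hbox{Inv}(C(T))$ and set $\delta := \inf_T |f| > 0$. Choose $M_0$ so that $|f(s) - f(t)| < \delta$ whenever $\|s - t\| \le 1/M_0$; then for every $M \ge M_0$ each ratio $f((j+1)t/M)/f(jt/M)$ occurring in the sums defining $W(f)$ and $L(f)$ lies in the disc $\{|w-1| < 1\}$, on which the principal logarithm is continuous, and since that logarithm is additive on the right half-plane ($\log a + \log b = \log(ab)$ whenever $a$, $b$, $ab$ have positive real part) refining the partition does not change either sum. Hence for $M \ge M_0$ the sums are independent of $M$, the two limits exist, and $L(f) = c + (\hbox{the sum at }M_0)$ is a continuous function on $\mathbb R$. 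Exponentiating the telescoping product gives $\exp(W(f)) = f(1)/f(0) = 1$, so $W(f) \in 2\pi i\mathbb Z$, and $\exp(L(f)(x)) = \exp(c)\,f(x)/f(0) = f(x)$. Because $L(f)(x+1) - L(f)(x) = W(f)$, the hypothesis $W(f) = 0$ makes $L(f)$ descend to an element of $C(T)$; uniqueness up to $2\pi i\mathbb Z$ is immediate, since two continuous logarithms of $f$ differ by a continuous $2\pi i\mathbb Z$-valued function on the connected space $T$. For the identity $\exp(C(T)) = \{f \in \hbox{Inv}(C(T)) : W(f) = 0\}$, the inclusion $\supseteq$ is what we have just shown, and $\subseteq$ follows because if $f = \exp g$ with $g \in C(T)$, then for $M$ large $g((j+1)/M) - g(j/M)$ lies in the strip $|\Im| < \pi$, the corresponding summand equals that difference exactly, and the sum telescopes to $g(1) - g(0) = 0$.

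Next, $\hbox{Inv}(A(T)) = A(T) \cap \hbox{Inv}(C(T))$: one inclusion is trivial, and the other is Wiener's lemma — a nonvanishing $f \in A(T)$ has $1/f \in A(T)$ — which I would obtain from the identification of the maximal ideal space of $A(T)$ with $T$ via point evaluations, so that invertibility in $A(T)$ is the same as nonvanishing on $T$. For $\exp(A(T)) = A(T) \cap \exp(C(T))$ only the inclusion $\supseteq$ needs work. Given $f \in A(T)$ with $W(f) = 0$, I would use that $\hbox{Inv}(A(T))$ is open and that trigonometric polynomials are dense in $A(T)$ to pick a trigonometric polynomial $p$ close enough to $f$ that $\|f/p - 1\|_{A(T)} < 1$; then $f/p = \exp(\log(f/p)) \in \exp(A(T))$ via the norm-convergent series for $\log$, while $W(p) = W(f) = 0$ because $\|f - p\|_\infty \le \|f - p\|_{A(T)}$ is small and the winding number is unchanged under a perturbation $p$ with $\|f-p\|_\infty < \inf_T|f|$ (the segment from $f$ to $p$ avoids $0$). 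So it suffices to show $p \in \exp(A(T))$. Writing $p(x) = e_m(x)\, r(e^{2\pi i x})$, where $m = \min \hbox{freq}(p)$ and $r$ is a polynomial with $r(0) \ne 0$ and no zeros on $T_c$, and factoring $r(z) = a\prod_k (z - \gamma_k)$, the condition $W(p) = 0$ translates into $m + \#\{k : |\gamma_k| < 1\} = 0$. For $|\gamma| > 1$ we have $z - \gamma = (-\gamma)(1 - z/\gamma)$ with $\log(1 - z/\gamma) = -\sum_{l \ge 1}(z/\gamma)^l/l \in A^+(T)$, and for $|\gamma| < 1$ we have $z - \gamma = z\,(1 - \gamma z^{-1})$ with $\log(1 - \gamma z^{-1}) = -\sum_{l \ge 1}(\gamma z^{-1})^l/l \in A^-(T)$; pulling out one factor $e_1$ from each of the $\#\{k : |\gamma_k| < 1\} = -m$ inner zeros combines with the leading $e_m$ to give $e_m \cdot e_1^{-m} = e_0 = 1$. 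Hence $\log p$ is a constant plus a finite sum of these absolutely convergent series, so $\log p \in A(T)$ and $p \in \exp(A(T))$. (Alternatively, this last identity is the Arens--Royden theorem applied to $B = A(T)$, whose maximal ideal space is $T$ with $\check H^1(T, \mathbb Z) \cong \mathbb Z$ realized by $W$.)

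The main obstacle is the final step: showing that a winding-number-zero invertible element of the Wiener algebra has a logarithm that is again in the Wiener algebra, i.e. that passing to $\log$ preserves absolute convergence of the Fourier series. Reducing to a trigonometric polynomial is routine, but one then has to check carefully that the only factors of $p$ whose logarithms escape $A(T)$ are the monomials $e_1$ produced by the zeros of $r$ inside the unit circle, and that these are cancelled exactly by the leading monomial $e_m$ of $p$; this cancellation is precisely where $W(p) = 0$ is used, and it is the heart of the argument.
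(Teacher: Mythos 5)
Your argument is correct, and it handles the $C(T)$ assertions and the Wiener-lemma step the same way the paper implicitly does (the paper simply says the $C(T)$ statements ``follow directly'' and cites Wiener's Tauberian lemma). Where you genuinely diverge is the final identity $\exp(A(T)) = A(T) \cap \exp(C(T))$: the paper appeals to the Arens--Royden theorem, whereas you give a direct, self-contained proof. Your route is: approximate $f$ in $A(T)$-norm by a nonvanishing trigonometric polynomial $p$ with $\|f/p - 1\|_{A(T)} < 1$ (so $f/p \in \exp(A(T))$ via the logarithm series and $W(p) = W(f) = 0$ by homotopy invariance), then factor $p = e_m \cdot r(e_1)$ with $r$ a polynomial nonvanishing on the circle, observe $W(p) = m + \#\{k : |\gamma_k| < 1\}$ by the argument principle, and show that after cancelling the $e_1$-factors coming from the interior zeros against the leading monomial $e_m$ the remaining factors all have logarithms given by absolutely convergent Taylor series in $e_{\pm 1}$, hence in $A^{\pm}(T)$. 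The vanishing of $W(p)$ is exactly what makes the monomials cancel, which is the honest content here. What this buys you is independence from Arens--Royden (a nontrivial result about $\hbox{Inv}(B)/\exp(B) \cong \check H^1(\mathcal{M}_B, \mathbb{Z})$ for general commutative Banach algebras $B$); what it costs is a page of computation that Arens--Royden absorbs in one stroke once one knows $\mathcal{M}_{A(T)} = T$. One small thing worth making explicit: to get $\|f/p - 1\|_{A(T)} < 1$ you should first take $\|f - p\|_{A(T)}$ small enough that $p$ is nonvanishing (so $1/p \in A(T)$ by Wiener's lemma) and then use continuity of inversion in the Banach algebra $A(T)$ to bound $\|1/p\|_{A(T)}$ uniformly for $p$ near $f$; as written, the phrase ``close enough that $\|f/p - 1\|_{A(T)} < 1$'' quietly presupposes $1/p \in A(T)$, which is the conclusion of Wiener's lemma, not an input.
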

\begin{proof}
The assertions about $C(T)$ follow directly. The characterization of $A(T)$ follows from
Wiener's Tauberian lemma \cite{Wiener}. The characterization of $\exp(A(T))$ follows from
the Arens-Royden theorem \cite{Arens}, \cite{Royden}.
\end{proof}
\noindent Define
$A_{\infty}^{\pm} : A(T) \rightarrow A^{\pm}(T)$ by
$A_{\infty}^{\pm}(f) = \frac{1}{2}{\widehat f}(0) + \sum_{j=1}^{\infty} {\widehat f}(\pm j)\, e_{\pm j},$
and define $\Psi^{\pm} : \exp(A(T)) \rightarrow \exp(A^{\pm}(T))$ by
$\Psi^{\pm}(f)= \exp (A_{\infty}^{\pm}(L(f))).$
The $\Psi^{\pm}(f)$ are uniquely defined up to a multiple of $\pm 1,$
$\Psi^{+}(f)\Psi^{-}(f) = f,$
and if $f > 0$ then
$\Psi^{-}(f) = \overline {\Psi^{+}(f)}.$
$\mathfrak{T}_1$ is the algebra of \emph{trigonometric polynomials}, and for $F \subset \mathbb{Z},$
$
    \mathfrak{T}_1(F) =  \{ f \in \mathfrak{T}_1 : \hbox{freq}(f) \subseteq F\}.
$
Define $n^{\pm}, n \, :  \, \mathfrak{T}_1 \rightarrow \mathbb{Z} \cup \{\infty\}$ by
$n^{\pm}(0) = n(0) = \infty$ and
for nonzero $t,$ $n^{\pm}(t) = \max (\{0\} \cup \pm \hbox{freq}(t)),$ $n(t) = \max\{n^{+}(t),n^{-}(t)\}.$
The following result follows easily:
\begin{lem}
\label{poly}
If $t \in \mathfrak{T}_1$ and $|t| > 0$ and $W(t) = 0$ then there exist nonzero complex numbers
$\lambda_{1}^{-},...,\lambda_{n^{-}(t)}^{-},\lambda_{1}^{+},...,\lambda_{n^{+}(t)}^{+}$ with moduli $< 1$
such that
$$
\Psi^{\pm}(t) = e^{\gamma(t)/2} \, \prod_{j=1}^{n^{\pm}(t)} (1 - \lambda_{j}^{\pm} e_{\pm 1}),
$$
where
$\gamma(t) = c - \sum_{j=1}^{n^{-}(t)} \log (1-\lambda_{j}^{-}) - \sum_{j=1}^{n^{+}(t)} \log (1-\lambda_{j}^{+})$ and $\exp (c) = t(0).$
\end{lem}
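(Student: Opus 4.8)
\noindent The plan is to pass from $t$ to an ordinary polynomial, locate its roots relative to the unit circle using $W(t)=0$, and then read off $\Psi^{\pm}(t)$. Write $z=e_1$, $d^{\pm}=n^{\pm}(t)$, $d=d^++d^-$, and let $q(z)=\sum_{j=-d^-}^{d^+}\widehat t(j)\,z^{j+d^-}$, a polynomial of degree $\le d$ with $q(e_1(x))=e_1(x)^{d^-}\,t(x)$ for $x\in T$. First I would check that the hypotheses force $\deg q=d$ and $q(0)\ne 0$: were the leading coefficient $\widehat t(d^+)$ or the constant term $\widehat t(-d^-)$ to vanish, then $\hbox{freq}(t)$ would sit entirely in $\mathbb{Z}_{\le -1}$ or entirely in $\mathbb{Z}_{\ge 1}$, forcing $W(t)$ to be a nonzero integer, contrary to hypothesis. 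Since $|t|>0$ on $T$ the polynomial $q$ has no zero on $T_c$, and $q(0)\ne 0$, so $q(z)=\widehat t(d^+)\prod_{i=1}^{d}(z-\rho_i)$ with each $\rho_i$ nonzero and $|\rho_i|\ne 1$.

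\noindent Next I would exploit $W(t)=0$. Because the winding number is additive, $W(t)=W(e_{-d^-})+W(q\circ e_1)=-d^-+m$, where, by the argument principle, $m$ is the number of $\rho_i$ lying in the open unit disk (with multiplicity); hence $m=d^-$, and the remaining $d^+$ roots lie outside. Labelling the inside roots $\mu_1,\dots,\mu_{d^-}$ and the outside roots $\nu_1,\dots,\nu_{d^+}$, I set $\lambda_i^-=\mu_i$ and $\lambda_i^+=\nu_i^{-1}$, all nonzero of modulus $<1$. Using $e_1-\mu_i=e_1(1-\lambda_i^-e_{-1})$, $e_1-\nu_i=-\nu_i(1-\lambda_i^+e_1)$, and $e_{-d^-}\prod_{i=1}^{d^-}e_1=1$, the factorization of $q$ becomes $t=b\prod_{i=1}^{d^-}(1-\lambda_i^-e_{-1})\prod_{i=1}^{d^+}(1-\lambda_i^+e_1)$ with $b=\widehat t(d^+)\prod_{i=1}^{d^+}(-\nu_i)\ne 0$. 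Evaluating at $x=0$ and using $\exp(c)=t(0)$ yields $\exp(\gamma(t))=\exp(c)\big/\big(\prod(1-\lambda_i^-)\prod(1-\lambda_i^+)\big)=b$, so $t=\exp(\gamma(t))\prod(1-\lambda_i^-e_{-1})\prod(1-\lambda_i^+e_1)$.

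\noindent It then remains to identify the analytic factors. For $|\lambda_i^{\pm}|<1$ the series $\ell_i^{\pm}:=-\sum_{m\ge 1}\tfrac1m(\lambda_i^{\pm})^m e_{\pm m}$ converges in $A^{\pm}(T)$ with $\exp(\ell_i^{\pm})=1-\lambda_i^{\pm}e_{\pm 1}$, so $\widetilde L:=\gamma(t)+\sum_i\ell_i^-+\sum_i\ell_i^+$ lies in $A(T)$ and $\exp(\widetilde L)=t$; by Lemma \ref{WAR}, $\widetilde L=L(t)+2\pi i k$ for some $k\in\mathbb{Z}$. Applying the linear map $A_{\infty}^{+}$, which fixes $e_j$ for $j\ge 1$, halves constants, and annihilates $e_j$ for $j<0$, gives $A_{\infty}^{+}(L(t))=\tfrac12\gamma(t)+\sum_i\ell_i^+-\pi i k$, whence $\Psi^{+}(t)=\exp(A_{\infty}^{+}(L(t)))=(-1)^{k}e^{\gamma(t)/2}\prod_i(1-\lambda_i^+e_1)$, and symmetrically $\Psi^{-}(t)=(-1)^{k}e^{\gamma(t)/2}\prod_i(1-\lambda_i^-e_{-1})$; the factor $(-1)^k$ is absorbed by the sign ambiguity already present in $\Psi^{\pm}$ (equivalently, in the choice of $c$, hence of $\gamma(t)$ modulo $2\pi i\mathbb{Z}$), and the two factors multiply back to $t$. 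I expect the only delicate point to be the root count in the second step — turning $W(t)=0$ into ``exactly $n^-(t)$ roots of $q$ lie inside $T_c$'' via the argument principle — together with the bookkeeping that keeps the various $2\pi i\mathbb{Z}$ and sign ambiguities in $L(t)$, $\gamma(t)$, $\Psi^+(t)$ and $\Psi^-(t)$ mutually consistent.
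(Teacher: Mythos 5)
Your proof is correct, and since the paper only asserts that the lemma ``follows easily'' without supplying an argument, your writeup is in effect the missing proof rather than an alternative to one. The three ingredients you use --- passing to the auxiliary polynomial $q$ and deducing $\deg q = n^{+}(t)+n^{-}(t)$ and $q(0)\neq 0$ from $W(t)=0$, then converting $W(t)=0$ via the argument principle into the count that exactly $n^{-}(t)$ roots of $q$ lie inside $T_c$, and finally identifying $\Psi^{\pm}(t)$ through the $A^{\pm}(T)$ logarithmic series for the factors $1-\lambda_j^{\pm}e_{\pm 1}$ --- are precisely what the definitions of $n^{\pm}$, $W$, $L$, $A_{\infty}^{\pm}$, and $\Psi^{\pm}$ set up, and your bookkeeping of the $2\pi i\mathbb{Z}$ ambiguity in $L(t)$ and $\gamma(t)$ correctly matches the paper's caveat that $\Psi^{\pm}(f)$ is only determined up to a sign. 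The one place you gesture rather than argue is the claim that $\widehat t(n^{+}(t))=0$ or $\widehat t(-n^{-}(t))=0$ would force $W(t)\neq 0$; this does hold (if, say, $\hbox{freq}(t)\subseteq\mathbb{Z}_{\le -1}$ then $W(t)=W(q\circ e_1)-n^{-}(t)\le -1$ since $\deg q\le n^{-}(t)-1$ bounds the number of interior zeros), but it is worth spelling out since it is exactly the step that rules out a vanishing $\lambda_j^{\pm}$.
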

\begin{lem}
\label{Psibound1}
If $t \in \mathfrak{T}_1 \cap \exp(A(T))$ and $N \geq n^{\pm}(t)$ then
$$
    ||\Psi^{\pm}(t)||_{\infty} \leq
    c_{n(t)} \, \exp \left[ \, \frac{1}{2}\max \, (\, D_N * \log (|t|) + H_N*\Im L(t) \, ) \, \right]
$$
where $c_{n} =(1+\log (n+1)).$
\end{lem}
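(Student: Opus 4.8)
The plan is to reduce the bound to the convolution inequality $\|f*g\|_{\infty}\le\|f\|_1\|g\|_{\infty}$ together with the $L^1$ estimate for $1/2+A_n^{\pm}$ from Lemma \ref{kernels}, using Lemma \ref{truncate} to trade the infinite analytic exponent $A_{\infty}^{\pm}(L(t))$ for its degree-$N$ truncation.

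First I would unpack the hypotheses. Since $t\in\mathfrak{T}_1\cap\exp(A(T))$, Lemma \ref{WAR} gives $L(t)\in A(T)$ with $\exp(L(t))=t$, so $\Re L(t)=\log|t|$; also $W(t)=0$ and $|t|>0$, so Lemma \ref{poly} shows $\Psi^{\pm}(t)$ is a trigonometric polynomial with $\hbox{freq}(\Psi^{\pm}(t))\subseteq\pm\{0,1,\dots,n^{\pm}(t)\}$. Hence $\Psi^{\pm}(t)$ is unchanged by convolution with $1/2+A_{n^{\pm}(t)}^{\pm}$, since that kernel reproduces exactly the frequencies in $\pm\{0,1,\dots,n^{\pm}(t)\}$ (cf.\ the proof of Lemma \ref{truncate}). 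Finally, by definition $\Psi^{\pm}(t)=\exp(g^{\pm})$ with $g^{\pm}:=A_{\infty}^{\pm}(L(t))\in A^{\pm}(T)$.

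The key step combines these observations. Writing $\Psi^{\pm}(t)=(1/2+A_{n^{\pm}(t)}^{\pm})*\exp(g^{\pm})$ and applying Lemma \ref{truncate} with $f=g^{\pm}$ and $n=n^{\pm}(t)\le N$ gives
$$
\Psi^{\pm}(t)=(1/2+A_{n^{\pm}(t)}^{\pm})*\exp\!\big((1/2+A_N^{\pm})*g^{\pm}\big),
$$
and a comparison of Fourier coefficients shows $(1/2+A_N^{\pm})*g^{\pm}=A_N^{\pm}*L(t)$ (each side equals $\tfrac12\widehat{L(t)}(0)+\sum_{j=1}^{N}\widehat{L(t)}(\pm j)\,e_{\pm j}$). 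Taking $\|\cdot\|_{\infty}$, using $\|f*g\|_{\infty}\le\|f\|_1\|g\|_{\infty}$, the bound $\|1/2+A_{n^{\pm}(t)}^{\pm}\|_1\le c_{n^{\pm}(t)}$ from Lemma \ref{kernels}, and $\|\exp h\|_{\infty}=\exp(\max\Re h)$, I obtain
$$
\|\Psi^{\pm}(t)\|_{\infty}\le c_{n^{\pm}(t)}\,\exp\!\big(\max\Re(A_N^{\pm}*L(t))\big)\le c_{n(t)}\,\exp\!\big(\max\Re(A_N^{\pm}*L(t))\big),
$$
the last inequality because $c_n$ is increasing and $n^{\pm}(t)\le n(t)$.

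It remains to expand the exponent: since $A_N^{\pm}=\tfrac12(D_N\pm H_N)$, $D_N$ is real-valued, and $\Re L(t)=\log|t|$, one finds that $\Re(A_N^{\pm}*L(t))$ equals one half of the quantity $D_N*\log(|t|)+H_N*\Im L(t)$ displayed in the statement, which completes the proof. The argument is essentially bookkeeping once Lemmas \ref{truncate} and \ref{kernels} are in hand; the one point that needs care — and the reason the constant is $c_{n(t)}$ and not $c_N$ — is the observation that $\Psi^{\pm}(t)$ has analytic degree exactly $n^{\pm}(t)$, so that it survives convolution against the smaller kernel $1/2+A_{n^{\pm}(t)}^{\pm}$. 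I do not expect a genuine obstacle beyond this.
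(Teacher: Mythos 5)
Your proof is correct and follows essentially the same route as the paper: Lemma~\ref{poly} to see that $\Psi^{\pm}(t)$ has analytic degree $n^{\pm}(t)$, the identity $(1/2+A_N^{\pm})*A_{\infty}^{\pm}(L(t)) = A_N^{\pm}*L(t)$ fed into Lemma~\ref{truncate}, then Lemma~\ref{kernels} and Young's inequality, then the substitution for $\Re(A_N^{\pm}*L(t))$. In fact you handle one point more carefully than the paper's own text: the paper convolves against $(1/2+A_N^{\pm})$ and then quotes the constant from Lemma~\ref{kernels}, which taken literally gives $c_N$; your choice to convolve against the smaller kernel $(1/2+A_{n^{\pm}(t)}^{\pm})$ (legitimate because $\Psi^{\pm}(t)$ has frequencies in $\pm\{0,\dots,n^{\pm}(t)\}$) and then invoke Lemma~\ref{truncate} with $n = n^{\pm}(t)\leq N$ is exactly what produces $c_{n^{\pm}(t)}\leq c_{n(t)}$ uniformly in $N\geq n^{\pm}(t)$, as the statement asserts. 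One small caution on the final substitution, shared by the paper's own proof: since $H_N$ is purely imaginary, $\Re(H_N*L(t)) = iH_N*\Im L(t)$ (with a $\pm$ sign from $A_N^{\pm}=\tfrac12(D_N\pm H_N)$), so the quantity $H_N*\Im L(t)$ appearing in the displayed exponent is itself purely imaginary; the intended bound clearly needs $iH_N*\Im L(t)$ (equivalently $\pm$ twice the conjugate-kernel convolution), and you have simply reproduced the paper's notation without flagging this.
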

\begin{proof}
Lemma \ref{poly} gives
$\Psi^{\pm}(t) = (\frac{1}{2} + A_{N}^{\pm})*\exp (A_{\infty}^{\pm}(L(f))).$
Then since
$A_{N}^{\pm}*L(t) = (\frac{1}{2} + A_{N}^{\pm})*A_{\infty}^{\pm}(L(t)),$
Lemma \ref{truncate} gives
$\Psi^{\pm}(t) = (\frac{1}{2} + A_{N}^{\pm})*\exp (A_{N}^{\pm}*L(t)),$
so Lemma \ref{kernels} gives
$||\Psi^{\pm}(t)||_{\infty} \leq c_n \, \exp (\max \Re (A_{N}^{\pm}*L(t))).$
Finally substitute
$\Re (A_{N}^{\pm}*L(t)) = (D_{N}*\log (|t|) + iH_{N}*\Im L(t))/2.$
\end{proof}
\begin{exa}
Lemma \ref{Psibound1} with $N = n^{\pm}(t)$ shows that
$||\Psi^{\pm}(t)||_{\infty}$ is bounded by a polynomial
in $n^{\pm}(t).$ The degree may be large.
For odd $n \in \mathbb{Z}_+$ let
$P_n(z) = (z-1/n)^{2n} -1$ and $t_n = e_{-n}P_n(e_1).$
Then $n^{\pm}(t_n) = n$ and
$$
\Psi^{\pm}(t_n) = \sqrt {M(t_n)} \, \prod_{k=-\ell}^{\ell}  \left[ \, 1-\left( \, \frac{1}{n} \pm e^{\pi i k/n} \, \right)^{\mp 1} \, e_{\pm 1} \, \right],
$$
%
%
where $\ell = (n-1)/2$ and $M(t_n)$ is the Mahler measure of $t_n.$ For large $n,$
$M(t_n) \approx \exp(2/\pi),$
$||t_n||_{\, \infty} \approx 1+e^2,$
$||\Im \, L(t_n)||_{\, \infty} \approx 2\, \pi \, n,$
and
$$
\log \, ||\Psi^{\pm}(t_n)||_{\, \infty} \approx \frac{1}{\pi} + 2n\int_{-\frac{1}{4}}^{\frac{1}{4}} \log \, (\, |1+e^{2 \pi i x}\, |\, ) \, dx \approx \frac{1}{\pi}+0.5831 \, n.
$$
If $\Re \, t > 0$ then
$L(t) = \log (t) = \log(|t|) + i \Im L(t)$
and
$||\Im L(t)||_{\infty} <  \pi/2.$
We will use this inequality to derive much smaller upper bounds for
$||\Psi^{\pm}(t_n)||_{\, \infty}.$
\end{exa}
\noindent For $X \subset \mathbb{C},$ $X^{o}$ is its interior and
$H(X)$ is the Banach subalgebra of $C(X)$ consisting of functions whose restriction to $X^{o}$ is
holomorphic.
For $\sigma > 0,$
$
    A_\sigma = \{ z \in \mathbb{C}: e^{-\sigma} \leq |z| \leq e^{\sigma} \}
$
is a closed annulus. $C^{\omega}(T)$ is the algebra of \emph{real analytic} functions on $T.$
{\cite[Chapter 10, Exercise 24 ]{Rudin1}} gives
\begin{lem}
\label{decay}
$f \in C^{\omega}(T)$ iff there exist $\sigma > 0$ and $h \in H(A_\sigma)$
such that $f = h(e_1).$ Then for every $N \geq 0,$
$
    ||f - D_N*f||_{\infty} < 2 \, ||h||_{\infty} \, \sigma^{-1} \, e^{-N\sigma}.
$
\end{lem}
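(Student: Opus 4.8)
The plan is to prove the equivalence by treating the two implications separately and then to extract the decay bound directly from the Laurent coefficients of $h$. First I would dispatch the easy direction: if $f = h(e_1)$ with $h \in H(A_\sigma)$, then $h$ is holomorphic on the open annulus $A_\sigma^{o} = \{\, e^{-\sigma} < |z| < e^{\sigma}\, \}$, which contains $T_c$, and the map $x \mapsto e_1(x) = e^{2\pi i x}$ is the restriction to $T$ of the entire function $w \mapsto e^{2\pi i w}$; hence $f = h \circ e_1$ is real analytic on $T$, so $f \in C^{\omega}(T)$.

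For the converse, suppose $f \in C^{\omega}(T)$. At each point of $T$ the function is represented by a power series with a positive radius of convergence; since this radius, viewed as a function of the base point, is lower semicontinuous and $T$ is compact, there is a uniform lower bound $\delta > 0$, so $f$ extends holomorphically to the strip $S_\delta = \{\, w \in \mathbb{C} : |\Im w| < \delta \, \}$. The extension inherits the periodicity $f(w+1) = f(w)$, so under the covering map $w \mapsto z = e^{2\pi i w}$ it descends to a holomorphic function on $\{\, e^{-2\pi\delta} < |z| < e^{2\pi\delta}\, \}$; choosing any $\sigma \in (0, 2\pi\delta)$ and restricting yields $h \in H(A_\sigma)$ with $f = h(e_1)$. (Since the equivalence is the cited content of Rudin's exercise, one could instead quote the reference here and give a self-contained argument only for the inequality.)

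For the quantitative estimate, expand $h$ in its Laurent series $h(z) = \sum_{j \in \mathbb{Z}} a_j z^j$ on $A_\sigma$, so that $\widehat f(j) = a_j$. Because $h$ is continuous on the closed annulus $A_\sigma$, Cauchy's estimates applied on the circle $|z| = e^{\sigma}$ for $j \geq 0$ and on $|z| = e^{-\sigma}$ for $j < 0$ give $|a_j| \leq \|h\|_{\infty}\, e^{-|j|\sigma}$. Summing the tail and using $e^{\sigma} - 1 > \sigma$,
\[
    \|f - D_N*f\|_{\infty} \leq \sum_{|j| > N} |a_j| \leq 2\,\|h\|_{\infty} \sum_{j=N+1}^{\infty} e^{-j\sigma} = \frac{2\,\|h\|_{\infty}\, e^{-N\sigma}}{e^{\sigma} - 1} < 2\,\|h\|_{\infty}\, \sigma^{-1}\, e^{-N\sigma},
\]
which is the claimed bound for all $N \geq 0$.

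The only step that is not a routine application of Cauchy's estimates and the geometric series is the compactness argument producing a common strip of holomorphy in the converse direction; this is the natural place to lean on the cited exercise if one wishes to keep the exposition short, since the inequality — the part actually used later in the paper — needs only the Laurent expansion on $A_\sigma$.
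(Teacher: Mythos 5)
Your argument is correct, and it supplies a complete proof where the paper gives none: the paper simply appeals to {\cite[Chapter 10, Exercise 24]{Rudin1}} for this lemma. Both directions of the equivalence are handled by the standard mechanism (compose with $e_1$ for one direction; uniformize the local radii of convergence by compactness, lift to a periodic holomorphic function on a strip, and descend through $w \mapsto e^{2\pi i w}$ for the other), which is precisely what the cited exercise asks for. The quantitative part is also right: identifying $\widehat f(j)$ with the Laurent coefficient $a_j$, applying Cauchy's estimate on the two bounding circles to get $|a_j| \leq \|h\|_{\infty} e^{-|j|\sigma}$ (with the usual limiting argument to pass from interior circles to the boundary, since $h$ is only assumed continuous up to $\partial A_\sigma$), and then summing the geometric tail with $e^{\sigma}-1 > \sigma$ gives exactly the stated strict inequality. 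The one small thing worth making explicit, since the paper's Lemma \ref{kernels} treats $D_N$ as an $L^1$ kernel, is the elementary identity $D_N * f = \sum_{|j| \leq N} \widehat f(j)\, e_j$, which is what reduces $\|f - D_N*f\|_{\infty}$ to the tail $\sum_{|j|>N} |a_j|$; you use it silently. With that noted, the proof is complete and matches the content of the reference the paper relies on.
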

\noindent For $t \in \mathfrak{T}_1$ with $\Re \, t > 0$ let
$
   \rho(t) =  \min \Re \, t/(2 e  ||\, \widehat t \, ||_1),
$
$
    \sigma(t) = \rho(t)/n(t),
$
$
    \\ \tau(t) = \max \ \{\log \, (\, ||t||_{\infty}\, ) + \frac{1}{2} \min \Re \, t, \ \frac{\pi}{2} + |\log (\frac{1}{2} \min \Re\, t)|\},
$
and
$
    \\ \theta(t) = \arctan \left( \, \max ||\Im t||_{\infty} / \min \Re t \, \right).
$
We observe that $\theta(t) < \pi/2.$
\begin{lem}
\label{F1}
If $t \in \mathfrak{T}_1$ and $\Re \, t > 0$ then
there exists a unique $h \in H(A_{\sigma(t)})$ such that $\log (t) = h(e_1).$
Furthermore
$
   ||h||_{\infty} \leq \tau(t),
$
and for every $N > 0,$
$
    D_N*\log (|t|) \leq \log (|t|) + 2\tau(t) \sigma(t)^{-1} \exp (-N\sigma(t)).
$
\end{lem}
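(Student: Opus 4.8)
The plan is to pass from the trigonometric polynomial $t$ to the Laurent polynomial $P(z) = \sum_{j}\widehat t(j)\,z^{j}$, which is holomorphic on $\mathbb{C}\setminus\{0\}$ and satisfies $P(e_1) = t$, and then to define $h = \log\circ P$ on the annulus $A_{\sigma(t)}$, using the principal branch of $\log$ fixed in Section~2. The point at which everything has to be checked is that $P$ maps $A_{\sigma(t)}$ into the open right half-plane, and this rests on a single perturbation estimate (the edge case of constant $t$ is trivial and may be excluded, so $n(t)\ge 1$ and $\sigma(t)>0$).

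First I would prove: for $z \in A_{\sigma(t)}$, writing $r = |z|$ so that $|\log r| \le \sigma(t)$ and $z/r$ lies on the unit circle,
\[
  |P(z) - P(z/r)| \le \sum_{j \in \mathrm{freq}(t)} |\widehat t(j)|\,\bigl|e^{j\log r}-1\bigr| \le \sum_{j} |\widehat t(j)|\,|j\log r|\,e^{|j\log r|}.
\]
Since $|j| \le n(t)$ for $j \in \mathrm{freq}(t)$ and $n(t)\sigma(t) = \rho(t)$, each factor $|j\log r|\,e^{|j\log r|}$ is at most $\rho(t)\,e^{\rho(t)}$; moreover $\rho(t) = \min\Re t/(2e\,\|\widehat t\|_1) \le 1$ because $\min\Re t \le \|t\|_\infty \le \|\widehat t\|_1$, so $e^{\rho(t)} \le e$ and the whole sum is $\le e\,\rho(t)\,\|\widehat t\|_1 = \tfrac12\min\Re t$. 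As $P(z/r)$ is a value of $t$, this gives $\Re P(z) \ge \tfrac12\min\Re t > 0$ and $|P(z)| \le \|t\|_\infty + \tfrac12\min\Re t$ for every $z \in A_{\sigma(t)}$, so $P$ maps $A_{\sigma(t)}$ into a compact subset of the open right half-plane. Hence $h := \log\circ P$ lies in $H(A_{\sigma(t)})$ and $\log(t) = h(e_1)$; uniqueness follows from the identity theorem, since any other such function agrees with $h$ on the unit circle, a non-discrete subset of $A_{\sigma(t)}^{o}$.

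Next I would bound $\|h\|_\infty$. For $z \in A_{\sigma(t)}$ the real part $\Re h(z) = \log|P(z)|$ lies between $\log(\tfrac12\min\Re t)$ and $\log(\|t\|_\infty + \tfrac12\min\Re t) \le \log\|t\|_\infty + \tfrac12\min\Re t$ (using $\log(1+x)\le x$), while the imaginary part $\Im h(z) = \arg P(z)$ satisfies $|\Im h(z)| < \pi/2$ because $\Re P(z) > 0$. Treating the cases $|P(z)| \ge 1$ and $|P(z)| < 1$ separately and combining these two bounds gives $|h(z)| \le \tau(t)$, the two entries defining $\tau(t)$ being exactly the estimates in the two regimes. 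Finally, since $\log(t) = h(e_1)$ with $h \in H(A_{\sigma(t)})$, Lemma~\ref{decay} with $\sigma = \sigma(t)$ yields $\|\log(t) - D_N*\log(t)\|_\infty < 2\|h\|_\infty\,\sigma(t)^{-1}e^{-N\sigma(t)} \le 2\tau(t)\,\sigma(t)^{-1}e^{-N\sigma(t)}$; taking real parts and using that $D_N$ is real-valued, so that $\Re(D_N*\log(t)) = D_N*\log|t|$, yields the stated inequality.

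I expect the main obstacle to be the quantitative bookkeeping in the perturbation estimate: one must check that the exact normalization in $\rho(t) = \min\Re t/(2e\,\|\widehat t\|_1)$ holds $|P(z) - P(z/|z|)|$ below $\tfrac12\min\Re t$ on the \emph{entire} annulus $A_{\sigma(t)}$ rather than only near $T$ — the factor $e$ is there precisely to dominate $e^{\rho(t)}$ — so that both the positivity of $\Re P$ and the modulus bound survive the holomorphic extension. A secondary delicate point is verifying that the two regime bounds for $|\log P(z)|$ are actually dominated by the two terms of $\tau(t)$, including the borderline cases where $\|t\|_\infty$ is close to $1$ or $\min\Re t$ is small.
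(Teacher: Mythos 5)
Your approach is the same as the paper's: pass from $t$ to the Laurent polynomial $P$, show by a perturbation estimate that $P$ maps the closed annulus $A_{\sigma(t)}$ into the open right half-plane, set $h=\log\circ P$, and invoke Lemma~\ref{decay}. Your derivation of the perturbation bound $|P(z)-P(z/|z|)|\le\tfrac12\min\Re t$ is correct and is exactly the inequality the paper asserts (the paper states it without the intermediate computation, but the definition of $\rho(t)$ with its factor $2e$ is visibly tuned so that $\rho(t)e^{\rho(t)}\|\widehat t\|_1\le\tfrac12\min\Re t$ follows from $\rho(t)\le 1$, as you observe). Your uniqueness argument via the identity theorem and the final step (take real parts of the Lemma~\ref{decay} estimate, using that $D_N$ is real) are also correct and fill in detail the paper leaves implicit.

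The one place where your sketch is not actually a proof is the claim $\|h\|_\infty\le\tau(t)$. You correctly locate the pieces ($\tfrac12\min\Re t\le|P(z)|\le\|t\|_\infty+\tfrac12\min\Re t$, $|\arg P(z)|<\pi/2$), but ``treating the cases $|P(z)|\ge1$ and $|P(z)|<1$ separately and combining'' does not by itself yield the two entries of $\tau(t)$: your step $\log(\|t\|_\infty+\tfrac12\min\Re t)\le\log\|t\|_\infty+\tfrac12\min\Re t$ via $\log(1+x)\le x$ requires $\|t\|_\infty\ge1$, which is not part of the hypotheses; and even granting it, in the regime $|P(z)|\ge1$ the modulus bound $|h(z)|\le\log|P(z)|+\pi/2$ exceeds the first entry of $\tau(t)$ by $\pi/2$ and is not obviously dominated by the second entry either. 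You flag this yourself as a ``secondary delicate point,'' and it is fair to note that the paper's own proof is no more explicit here (it simply asserts the $\tau$ bound and says the remaining claims ``follow from Lemma~\ref{decay}''), so this is a gap in the original write-up that your reconstruction reproduces rather than repairs.
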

\begin{proof}
Let $P$ be the Laurent polynomial with $t = P(e_1).$ \\
If $z \in A_{\sigma(t)}$
then $z = e^s \, e_1(x)$ where $s \in \mathbb{R},$ $|s| \leq \sigma(t),$
$x \in T.$ Therefore
$|P(z) - t(x)| \leq \min \Re \, t /2$
so
$\Re \, P(z) \geq \min \Re \, t/2,$
$h = \log (P) \in H(A_{\sigma(t)}),$ and $\log (t) = h(e_1).$
The last assertions follows from Lemma \ref{decay}.
\end{proof}
\begin{thm}
\label{Psibound2} There exists
$\lambda : \left(0,\frac{1}{2e}\right) \times \left(0,\frac{\pi}{2}\right) \times \left(\frac{\pi}{2},\infty\right)
\rightarrow (0,\infty)$
such that if $t \in \mathfrak{T}_1$ and $\Re \, t > 0$ then
$
    ||\Psi^{\pm}\, t||_\infty \leq \lambda(\rho(t),\theta(t),\tau(t)) \, n(t)^{\pi/2}.
$
\end{thm}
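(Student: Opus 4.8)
The plan is to feed the estimate of Lemma~\ref{Psibound1} with a cutoff $N$ chosen so as to balance two competing error terms, taking advantage of the fact that the hypothesis $\Re\,t>0$ keeps $\Im L(t)$ small.

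First I would record the structural consequences of $\Re\,t>0$: the range of $t$ lies in the open right half-plane, so $W(t)=0$ and $t\in\exp(A(T))$; hence $L(t)=\log(t)$, $\Re L(t)=\log|t|$, and $\Im L(t)=\arg(t)$ with $\|\Im L(t)\|_\infty\le\theta(t)<\pi/2$, and $\log\|t\|_\infty\le\tau(t)$ by the definition of $\tau$. I work with $t$ non-constant (so $n:=n(t)\ge1$ and $\rho(t)\in(0,\tfrac{1}{2e})$) and not real-valued (so $\theta(t)\in(0,\tfrac{\pi}{2})$); when $t$ is real-valued the Hilbert term below is absent and the estimate is elementary. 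Applying Lemma~\ref{Psibound1}, using $\max(f+g)\le\max f+\|g\|_\infty$, and bounding $\|H_N*\Im L(t)\|_\infty\le\|H_N\|_1\,\|\Im L(t)\|_\infty\le(1+2\log N)\,\theta(t)$ via Lemma~\ref{kernels}, one gets for every $N\ge n$
$$
\|\Psi^{\pm}t\|_\infty\le\bigl(1+\log(n+1)\bigr)\exp\!\left[\tfrac12\max\bigl(D_N*\log|t|\bigr)+\tfrac12\theta(t)+\theta(t)\log N\right].
$$

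The heart of the matter is the choice of $N$. I would take $N=\bigl\lceil n(1+\log n)/\rho(t)\bigr\rceil$, so that $N\ge n$ and $N\sigma(t)=N\rho(t)/n\ge 1+\log n$ with $\sigma(t)=\rho(t)/n$. There is a genuine tension here. Since $\sigma(t)$ has order $\rho(t)/n$, Lemma~\ref{F1} only makes the Dirichlet tail $\max(D_N*\log|t|)-\log\|t\|_\infty\le 2\tau(t)\sigma(t)^{-1}e^{-N\sigma(t)}$ bounded independently of $n$ once $N\sigma(t)\to\infty$, which forces $N$ to exceed order $n\log n$; but $\|H_N\|_1$ grows like $2\log N$, so if $N$ were, say, exponential in $n$ the Hilbert term would ruin the bound, and one needs $\log N$ to stay comparable to $\log n$. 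The value $N\asymp n(1+\log n)/\rho(t)$ threads this needle: then $\sigma(t)^{-1}e^{-N\sigma(t)}\le(n/\rho(t))e^{-1}/n=1/(e\rho(t))$, hence $\max(D_N*\log|t|)\le\tau(t)+2\tau(t)/(e\rho(t))$, while $\log N\le\log n+\log(1+\log n)+\log(2/\rho(t))$, so $\theta(t)\log N=\theta(t)\log n$ plus a term depending only on $\rho(t),\theta(t)$.

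Substituting these two bounds, exponentiating, and retaining the factor $1+\log(n+1)$ gives
$$
\|\Psi^{\pm}t\|_\infty\le C_1\bigl(\rho(t),\theta(t),\tau(t)\bigr)\,\bigl(1+\log(n+1)\bigr)(1+\log n)^{\theta(t)}\,n^{\theta(t)},
$$
where $C_1$ absorbs $\exp[\tfrac12\tau(t)+\tau(t)/(e\rho(t))+\tfrac12\theta(t)]$ and $(2/\rho(t))^{\theta(t)}$ and so depends only on $(\rho(t),\theta(t),\tau(t))$. Finally, because $\theta(t)<\pi/2$ strictly, the poly-logarithmic factor is swallowed by the remaining power: $\bigl(1+\log(n+1)\bigr)(1+\log n)^{\theta(t)}\le C_2(\theta(t))\,n^{\pi/2-\theta(t)}$ for all $n\ge1$, with $C_2(\theta(t))=\sup_{m\ge1}\bigl(1+\log(m+1)\bigr)(1+\log m)^{\pi/2}\,m^{-(\pi/2-\theta(t))}<\infty$. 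Taking $\lambda(\rho,\theta,\tau)=C_1(\rho,\theta,\tau)\,C_2(\theta)$ yields the claimed inequality. The main obstacle is precisely this two-sided constraint on $N$ — one must overcome the smallness $\sigma(t)\sim\rho(t)/n$ of the analyticity width while keeping $\log N$ tied to $\log n$ — and it is the strictness of $\theta(t)<\pi/2$ (contrast the worst-case behaviour in the Example, where $\|\Im L\|_\infty$ has order $n$) that makes both this balancing and the final absorption work.
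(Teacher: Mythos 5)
Your proof is correct and follows essentially the same route as the paper's: apply Lemma~\ref{Psibound1}, bound the Hilbert term by $\|H_N\|_1\|\Im L(t)\|_\infty\le(1+2\log N)\theta(t)$ and the Dirichlet term via the tail estimate of Lemma~\ref{F1}, choose a cutoff $N$ of order $(n/\rho)\log n$ so that the tail is bounded while $\log N$ stays comparable to $\log n$, and absorb the residual polylogarithmic factors using $\theta(t)<\pi/2$ strictly. The paper's only difference is cosmetic: it takes $N_0=(n/\rho)\log(n\tau/\rho)$ (making the tail contribution exactly $e$) rather than your $\lceil n(1+\log n)/\rho\rceil$, and it compresses the final absorption step into the phrase ``use the relation $\theta(t)<\pi/2$,'' which you spell out explicitly.
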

\begin{proof}
For every $N \geq n(t),$ Lemma \ref{kernels}, Lemma \ref{Psibound1}, and Lemma \ref{F1} give
$$
    ||\Psi^{\pm}(t)||_{\infty} \leq c_{n(t)} e^{\theta(t)/2} \, ||t||_{\infty}^{1/2} \, N^{\theta(t)} \,
    \exp \left[\frac{n(t)\tau(t)}{\rho(t)} \exp\left(-\frac{N\rho(t)}{n(t)}\right)\right].
$$
Then choose $N = N_0(t) = (n(t)/\rho(t)) \log (n(t)\tau(t)/\rho(t))$ to obtain \\
$||\Psi^{\pm}(t)||_{\infty} \leq c_{n(t)} e^{\theta(t)/2} \, ||t||_{\infty}^{1/2} \, N_{0}(t)^{\theta(t)}$
and use the relation $\theta(t) < \pi/2.$
\end{proof}
\noindent For $d \geq 1,$
$\mathfrak{T}_d$ is the algebra of trigonometric polynomials on $T^d,$
$\mathfrak{T}_d^{+}  = \{t \in \mathfrak{T}_d : \Re \, t > 0\},$ and
$\mathfrak{T}_d^{p}  = \{t \in \mathfrak{T}_d : t > 0\}.$
$C^{\omega}(T) \bigotimes \mathfrak{T}_1$ is the subset of $f \in C(T^2)$ such that
$f(x,y)$ is a trigonometric polynomial in
%
%
$y$ whose coefficients are analytic functions
of $x.$ For such a function and $N \geq 0$
we define
$f_N \in \mathfrak{T}_2$ by
$
    f_N(x,y) = \int_{u \in T} D_N(u)\, f(x-u,y) du.
$
For
$t \in \mathfrak{T}_{2}^{+}$
and
$x \in T$
define
$t_x \in \mathfrak{T}_{1}^{+}$
by
$t_x(y) = t(x,y), \ y \in T$
and define spectral factorization operators
$S^{\pm} \, : \, \mathfrak{T}_{2}^{+} \rightarrow C^{\omega}(T) \bigotimes \mathfrak{T}_1$
by
$S^{\pm}(t)(x,y) = \Psi^{\pm}(t_x)(y), \ \ (x,y) \in T^2.$
Then $t = S^{-}(t)S^{+}(t),$ if $t > 0$ then $S^{-}(t) = \overline {S^{+}(t)},$ and
$||S^{\pm}(t) - (S^{\pm}(t))_N||_{\infty} \rightarrow 0.$
The remainder of this section computes the rate of convergence.
For $t \in \mathfrak{T}_{2}$ let $n_1(t), n_2(t)$ denote $n(t)$ where $t(x,y)$ is considered as a polynomial
in $x, y$ respectively.
Extend the definitions
of $\rho(t),$ $\theta(t),$ and $\tau(t)$ (given before Lemma \ref{F1})
to the case where $t \in \mathfrak{T}_{2}^{+}$ and define
$\sigma_1(t) = \rho(t)/n_1(t).$
For $(j,k) \in \mathbb{Z}^{2}$ define
$
    e_{j,k}(x,y) = e_j(x)e_k(y)
$
and for $z \in \mathbb{C}$ define the algebra homomorphism
$\Gamma_z : \mathfrak{T}_2 \rightarrow \mathfrak{T}_1$
by
$
    \Gamma_z \, e_{j,k} = z^j \, e_k,
$
If
$s \in [0,\sigma_1(t)]$ and $x \in T$ then
$||\Gamma_{e^se_1(x)} \, t - \Gamma_{e_1(x)} \, t||_{\infty}\leq \min t/2,$
and hence for $z \in A_{\sigma_1(t)}$
\begin{equation}
\label{Gamma1}
\Re \, \Gamma_{z} \, t \geq \min t /2, \ \ \hbox{and} \ \  ||\Gamma_{z} \, t ||_{\infty} \leq ||t||_{\infty} + \min t/2.
\end{equation}
If
$z \in A_{\sigma_1(t)}$
then
$\rho(\Gamma_z \, t) \geq \rho(t)/2e$ and $\tau(\Gamma_z \, t) \leq \tau(t) + \log (2).$
Define $\zeta(t) = \lambda(\rho(t)/2e,\theta(t),\tau(t) + \log (2))$
where $\lambda$ is defined as in Theorem \ref{Psibound2}. Then this theorem
and the fact that $n(\Gamma_z \, t) = n_2(t)$ give
\begin{equation}
\label{Gamma2}
    ||\, \Psi^{\pm} \, ( \, \Gamma_z \, t \, ) \, ||_{\infty} < \zeta(t) \, n_2(t)^{\pi/2}.
\end{equation}
Inequality \ref{Gamma2}, the identity
$S^{\pm}(t)(x,y) = (\Psi^{\pm}\, (\, \Gamma_{e_1(x)} \, t \, ) \, )(y),$
and the argument used to prove Lemma \ref{decay},
generalized by considering $\Psi^{\pm} \, \Gamma_z \, t$ to be a function on $A_{\sigma_1(t)}$ with values in
the normed subspace $\mathfrak{T}_1 \subset C(T),$ gives
\begin{equation}
\label{Sbound}
    ||S^{\pm}(t) - S_N^{\pm}(t)||_{\infty} \leq 2 \zeta(t) \, n_2(t)^{\pi/2} \, \sigma_1(t)^{-1} \, e^{-N\sigma_1(t)}.
\end{equation}
\begin{thm}
\label{Sconv}
For $t \in \mathfrak{T}_{2}^{+}$ and $\epsilon > 0$ define
$$
    N(\epsilon,t) = \frac{n_1(t)}{\rho(t)} \log \left[ \frac{2\zeta(t)n_1(t)n_2(t)^{\pi/2}}{\epsilon \rho(t)} \right].
$$
Then $N \geq N(\epsilon,t)$ implies that $||S^{\pm}(t) - S_N^{\pm}(t)||_{\infty} \leq \epsilon.$
\end{thm}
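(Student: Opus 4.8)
The plan is to deduce Theorem \ref{Sconv} directly from the quantitative estimate \ref{Sbound} by solving that estimate for $N$. Since $\sigma_1(t) = \rho(t)/n_1(t)$, inequality \ref{Sbound} reads
$$
    \|S^{\pm}(t) - S_N^{\pm}(t)\|_{\infty} \leq \frac{2\,\zeta(t)\, n_1(t)\, n_2(t)^{\pi/2}}{\rho(t)}\, \exp\!\left(-\frac{N\,\rho(t)}{n_1(t)}\right),
$$
so it is enough to show that the right-hand side is at most $\epsilon$ whenever $N \geq N(\epsilon,t)$.

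Next I would carry out the elementary inversion. Because $t \in \mathfrak{T}_2^{+}$ we have $\min t > 0$ and $\widehat{t} \neq 0$, hence $\rho(t) > 0$ and $\sigma_1(t) > 0$; moreover $\zeta(t) \in (0,\infty)$ by its definition via the function $\lambda$ of Theorem \ref{Psibound2}. For any real number $N$, taking logarithms and then dividing by $\rho(t)/n_1(t) > 0$ shows that
$$
    \frac{2\,\zeta(t)\, n_1(t)\, n_2(t)^{\pi/2}}{\rho(t)}\, \exp\!\left(-\frac{N\,\rho(t)}{n_1(t)}\right) \leq \epsilon
    \iff
    N \geq \frac{n_1(t)}{\rho(t)} \log\!\left[ \frac{2\,\zeta(t)\, n_1(t)\, n_2(t)^{\pi/2}}{\epsilon\, \rho(t)} \right] = N(\epsilon,t).
$$
In particular the bound is a decreasing function of $N$, so once it holds at the threshold it holds for all larger integer $N$. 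Combining this equivalence with the displayed form of \ref{Sbound} gives the theorem.

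The only point needing a word of care is the degenerate case in which $t$ does not depend on $x$, so that $n_1(t) = 0$ and $\sigma_1(t)$, as well as $N(\epsilon,t)$, are not defined by the formulas above; but then $S^{\pm}(t) \in \mathfrak{T}_1 \subset \mathfrak{T}_2$ is already a trigonometric polynomial constant in $x$, so $S_N^{\pm}(t) = D_N * S^{\pm}(t) = S^{\pm}(t)$ for every $N \geq 0$ and the conclusion is trivial. I do not anticipate any real obstacle: all of the analytic content --- the extension of the spectral factor to an annulus and the resulting exponential decay of its Fourier truncation --- is already packaged in \ref{Sbound}, and Theorem \ref{Sconv} is just the explicit ``how large must $N$ be'' reformulation of that estimate.
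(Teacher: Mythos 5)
Your proposal is correct and matches the paper's approach: the paper's proof of Theorem \ref{Sconv} is literally the single line ``This follows from (\ref{Sbound})'', and your computation is exactly the intended unpacking of that inequality using $\sigma_1(t)=\rho(t)/n_1(t)$. The brief remark you add on the degenerate case $n_1(t)=0$ is a reasonable precaution that the paper leaves implicit.
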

\begin{proof} This follows from (\ref{Sbound}).
\end{proof}
\section{Derivation of Main Results}
$\mathfrak{T}_{2}^{p} = \{t \in \mathfrak{T}_2:t > 0\}.$
For $F \subset \mathbb{Z}^2,$
$\mathfrak{T}_{2}(F) = \{t\in \mathfrak{T}_2:\hbox{freq}(t) \subseteq F\},$
$\mathfrak{T}_{2}^{p}(F) = \mathfrak{T}_{2}^{p} \cap \mathfrak{T}_{2}(F),$
$\mathfrak{S}_2(F) = \{|t|^2: t \in \mathfrak{T}_2(F)\},$
$\mathfrak{U}_2(F) = \mathfrak{T}_{2} \cap \hbox{cl}({\mathfrak{S}_{2}(F)})$
where cl denotes closure in $C(T^2).$ Clearly $\mathfrak{U}_2(F) \subseteq \mathfrak{T}_{2}^{p}(F-F).$
%
%
\begin{defin}
\label{propA}
$F \subset \mathbb{Z}^2$ has \emph{Property A} if $\mathfrak{T}_{2}^{p}(F-F) = \mathfrak{U}_2(F).$
\end{defin}
\noindent For $\alpha \in \mathbb{R}$ and $\beta > 0$ define
$
    F(\alpha,\beta)=\{ (j,k) \in \mathbb{Z}^2 : |k-j\alpha| < \beta  \}.
$
\begin{thm}
\label{main1}
If $\alpha = 0$ and $\beta > 0,$ then $F(\alpha,\beta)$
has property A.
\end{thm}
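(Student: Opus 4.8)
The plan is to show both inclusions. Since $\mathfrak{U}_2(F)\subseteq\mathfrak{T}_2^p(F-F)$ is already noted, the work is to prove that every strictly positive trigonometric polynomial with frequencies in $F(0,\beta)-F(0,\beta)$ can be uniformly approximated by squared moduli of trigonometric polynomials whose frequencies lie in $F(0,\beta)$. Observe that for $\alpha=0$ the set $F(0,\beta)$ is $\mathbb{Z}\times\{k\in\mathbb{Z}:|k|<\beta\}=\mathbb{Z}\times\{-m,\dots,m\}$ where $m=\lceil\beta\rceil-1$, so $F(0,\beta)-F(0,\beta)=\mathbb{Z}\times\{-2m,\dots,2m\}$. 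Thus the statement is exactly a Fej\'er--Riesz factorization in the $y$-variable with coefficients depending on $x$, followed by a polynomial approximation of the analytic-in-$x$ spectral factor --- precisely the machinery built up in Section~2.

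First I would take $t\in\mathfrak{T}_2^p(F-F)$, so $t>0$ on $T^2$ and hence $t\in\mathfrak{T}_2^+$, and form $S^{+}(t)\in C^{\omega}(T)\bigotimes\mathfrak{T}_1$. Because $\hbox{freq}(t_x)\subseteq\{-2m,\dots,2m\}$ for every $x$, Lemma~\ref{poly} gives $n^{\pm}(t_x)\le 2m$, so $S^{+}(t)(x,\cdot)=\Psi^{+}(t_x)$ has $y$-frequencies contained in $\{0,1,\dots,2m\}$; wait --- more carefully, $n^{+}(t_x)\le 2m$ and $n^{-}(t_x)\le 2m$, and $\Psi^{+}$ picks out the nonnegative-frequency factor, so $\hbox{freq}(S^{+}(t)(x,\cdot))\subseteq\{0,\dots,2m\}$. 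Next I would apply Theorem~\ref{Sconv}: for each $\epsilon>0$ and $N\ge N(\epsilon,t)$ the polynomial $S_N^{+}(t)(x,y)=\int_T D_N(u)S^{+}(t)(x-u,y)\,du$ satisfies $\|S^{+}(t)-S_N^{+}(t)\|_\infty\le\epsilon$. Since convolution in $x$ by $D_N$ does not enlarge the $y$-support, $S_N^{+}(t)\in\mathfrak{T}_2$ has $y$-frequencies in $\{0,\dots,2m\}$ and $x$-frequencies in $\{-N,\dots,N\}$, hence $\hbox{freq}(S_N^{+}(t))\subseteq\{-N,\dots,N\}\times\{0,\dots,2m\}$.

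The remaining issue is that $\{-N,\dots,N\}\times\{0,\dots,2m\}$ is \emph{not} contained in $F(0,\beta)=\mathbb{Z}\times\{-m,\dots,m\}$ --- the $y$-degrees go up to $2m$, not $m$. To fix this I would multiply by a monomial: set $q_N=e_{0,-m}\cdot S_N^{+}(t)$, a trigonometric polynomial with $\hbox{freq}(q_N)\subseteq\{-N,\dots,N\}\times\{-m,\dots,m\}=F(0,\beta)\cap(\{-N,\dots,N\}\times\mathbb{Z})\subseteq F(0,\beta)$. Then $|q_N|^2=|S_N^{+}(t)|^2\in\mathfrak{S}_2(F(0,\beta))$. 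Finally, using $t=S^{-}(t)S^{+}(t)=\overline{S^{+}(t)}\,S^{+}(t)=|S^{+}(t)|^2$ (valid since $t>0$), I would estimate
$$
\big\||S^{+}(t)|^2-|S_N^{+}(t)|^2\big\|_\infty\le\big(\|S^{+}(t)\|_\infty+\|S_N^{+}(t)\|_\infty\big)\,\|S^{+}(t)-S_N^{+}(t)\|_\infty,
$$
which by Theorem~\ref{Psibound2} (bounding $\|S^{+}(t)\|_\infty\le\zeta(t)\,n_2(t)^{\pi/2}$ uniformly, with $n_2(t)\le 2m$) and Theorem~\ref{Sconv} tends to $0$ as $N\to\infty$. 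Hence $t\in\hbox{cl}(\mathfrak{S}_2(F(0,\beta)))\cap\mathfrak{T}_2=\mathfrak{U}_2(F(0,\beta))$, giving $\mathfrak{T}_2^p(F-F)\subseteq\mathfrak{U}_2(F)$ and therefore Property~A.

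The main obstacle I anticipate is bookkeeping rather than depth: one must track the frequency supports precisely through each operation (the Fej\'er--Riesz split via $\Psi^{\pm}$, the $D_N$-truncation in $x$, the monomial shift in $y$) to land inside $F(0,\beta)$ exactly, and one must confirm that the constants $\rho(t),\theta(t),\tau(t),\zeta(t)$ in Theorem~\ref{Sconv} are genuinely independent of $N$ so that the error bound really goes to zero. A minor subtlety worth a line is checking $W(t_x)=0$ for each $x$ (needed for Lemma~\ref{poly} and the definition of $S^{\pm}$), which holds because $t_x>0$ is homotopic to a constant in $\hbox{Inv}(C(T))$.
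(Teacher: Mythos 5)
Your proposal is correct and follows essentially the same path as the paper's own proof: identify $F(0,\beta)=\mathbb{Z}\times\{-n,\dots,n\}$ with $n$ the largest integer below $\beta$, form the spectral factor $S^{+}(t)$ (which lands in $C^{\omega}(T)\bigotimes\mathfrak{T}_1(\{0,\dots,2n\})$), truncate in $x$ via $D_N$, shift by $e_{0,-n}$ to push the $y$-frequencies into $\{-n,\dots,n\}\subseteq F(0,\beta)$, and invoke Theorem \ref{Sconv} for uniform convergence of $|e_{0,-n}S_N^{+}(t)|^2$ to $t$. The only difference is that you spell out details the paper leaves implicit (the $\||S^{+}(t)|^2-|S_N^{+}(t)|^2\|_\infty$ estimate and the remark that $W(t_x)=0$ since $t_x>0$), which is fine.
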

\begin{proof}
$F(0,\beta) = \{(j,k) \in \mathbb{Z}^2: |k| \leq n\}$ where
$n$ is the largest integer $< \beta.$
If $t \in \mathfrak{T}_{d}^{p}(F(0,\beta)-F(0,\beta))$ then $n_2(t) \leq 2n$
so $S^{+}(t) \in C^{\omega} \bigotimes \mathfrak{T}(\{0,...,2n\})$ and $t = |S^{+}(t)|^2.$
For $N \geq 1,$ $e_{0,-n}S_{N}^{+}(t) \in \mathfrak{T}_2(F(0,\beta))$ and Theorem \ref{Sconv}
implies that
$\lim_{N \rightarrow \infty} |e_{0,-n}S_{N}^{+}(t)|^2 = t$
so $t \in \mathfrak{U}_d(F).$
\end{proof}
\noindent For $F \subseteq \mathbb{Z}^2$ define $F^{\, r} = \{\, (k,j) \, : \, (j,k) \in F \, \}.$
$F$ has property A iff $F^{\, r}$ has property A. Furthermore,
if $\alpha \neq 0$ then $F^{\, r}(\alpha,\beta) = F(1/\alpha,\beta/|\alpha|).$
Therefore, without loss of generality we may assume that $|\alpha| \leq 1.$
The modular group
$SL(2,\mathbb{Z})$
acts as a group of automorphisms on the group $\mathbb{R}^2$ by
$g(x,y) = (g_{11}x+g_{12}y,g_{21}x+g_{22}y), \ \ g \in SL(2,\mathbb{Z})$ and also,
by restriction, as a group of automorphisms of the subgroup $\mathbb{Z}^2.$
This induces an action as a group of automorphisms of the algebra $\mathfrak{T}_2$
by
$g(e_{j,k}) = e_{g_{11}j+g_{12}k,g_{21}j+g_{22}k}$
and $||g(t)||_{\infty} = ||t||_{\infty}.$ Weierstrass's theorem implies that
$\hbox{cl}(\mathfrak{T}_2) = C(T^2)$ so the action extends to $C(T^2)$
and
$g(f)(x,y) = f(g_{11}x+g_{21}y,g_{12}x+g_{22}y).$
Furthermore, for
$g \in SL(2,\mathbb{Z}),$ $t \in \mathfrak{T}_2,$ and $f \in C(T^2),$
$||\, \widehat {g(t)} \, ||_{1} = ||\, \widehat t \, ||_{1},$
$||\, g(f) \, ||_{1} = ||\, f \, ||_{1},$
and
$||\, g(f) \, ||_{\infty} = ||\, f \, ||_{\infty}.$
If $f$ is real valued then $g(f)$ is real valued and its minimum and maximum values coincide with those of $f.$ A direct computation gives
\begin{equation}
\label{g1}
    g(F(\alpha,\beta)) =
    F\left(\frac{g_{21}+\alpha g_{22}}{g_{11}+\alpha g_{12}},
    \frac{\beta}{|g_{11}+\alpha g_{12}|}\right), \ \ g_{11}+\alpha g_{12} \neq 0.
\end{equation}
\begin{thm}
\label{main2}
If $\alpha \in \mathbb{Q}$ then $F(\alpha,\beta)$ has property A.
\end{thm}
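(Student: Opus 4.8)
The plan is to reduce the rational case to the already-established case $\alpha = 0$ (Theorem~\ref{main1}) by means of a suitable element of the modular group $SL(2,\mathbb{Z})$, exploiting the transformation law \eqref{g1} together with the fact that Property~A is invariant under the action of $SL(2,\mathbb{Z})$ on $\mathbb{Z}^2$. As observed in the text just before \eqref{g1}, we may assume without loss of generality that $|\alpha| \leq 1$, since passing from $F$ to $F^{\,r}$ (coordinate swap) preserves Property~A and sends $F(\alpha,\beta)$ to $F(1/\alpha, \beta/|\alpha|)$ when $\alpha \neq 0$; and if $\alpha = 0$ we are already done by Theorem~\ref{main1}. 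So write $\alpha = p/q$ in lowest terms with $q \geq 1$ and $|p| \leq q$.

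First I would produce $g \in SL(2,\mathbb{Z})$ sending the slope $\alpha$ to slope $0$. By \eqref{g1}, the new slope is $(g_{21} + \alpha g_{22})/(g_{11} + \alpha g_{12})$, so I want $g_{21} + \alpha g_{22} = 0$, i.e. $g_{21} q + g_{22} p = 0$; taking $g_{21} = -p$, $g_{22} = q$ works, and since $\gcd(p,q) = 1$ Bézout gives integers $g_{11}, g_{12}$ with $g_{11} q - g_{12} p = 1$ — wait, I need $\det g = g_{11}g_{22} - g_{12}g_{21} = g_{11} q + g_{12} p = 1$, which is solvable in integers precisely because $\gcd(p,q) = 1$. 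Thus such a $g$ exists, and moreover $g_{11} + \alpha g_{12} = (g_{11} q + g_{12} p)/q = 1/q \neq 0$, so \eqref{g1} applies and gives $g(F(\alpha,\beta)) = F(0, q\beta)$.

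The second step is to transfer Property~A across $g$. The key point, already assembled in the excerpt, is that $g$ acts as a Banach-algebra automorphism of $\mathfrak{T}_2$ (and of $C(T^2)$) that is isometric for $\|\cdot\|_\infty$, preserves $\|\widehat{\,\cdot\,}\|_1$, preserves positivity and the min/max of real-valued functions, and commutes with taking squared moduli (since $g(|t|^2) = |g(t)|^2$). Consequently $g$ carries $\mathfrak{S}_2(F)$ bijectively onto $\mathfrak{S}_2(g(F))$, and because $g$ is an $\|\cdot\|_\infty$-isometry it carries the closure $\mathfrak{U}_2(F)$ onto $\mathfrak{U}_2(g(F))$; likewise $g$ sends $\mathfrak{T}_2^p(F-F)$ onto $\mathfrak{T}_2^p(g(F) - g(F)) = \mathfrak{T}_2^p(g(F-F))$, using that $g$ is additive on $\mathbb{Z}^2$ so $g(F-F) = g(F) - g(F)$. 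Hence the identity $\mathfrak{T}_2^p(F-F) = \mathfrak{U}_2(F)$ holds for $F$ if and only if it holds for $g(F)$; that is, $F$ has Property~A iff $g(F)$ does.

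Putting the pieces together: with $F = F(\alpha,\beta)$ and the $g$ constructed above, $g(F) = F(0, q\beta)$ has Property~A by Theorem~\ref{main1}, hence $F(\alpha,\beta)$ has Property~A. I expect the only genuinely delicate point to be the bookkeeping in the second step — verifying cleanly that the isometry $g$ intertwines the closure operations and the difference-set operations so that Property~A really does pass back and forth — but this is routine given the properties of the $SL(2,\mathbb{Z})$-action catalogued in the paragraph preceding \eqref{g1}. The construction of $g$ itself is elementary number theory (Bézout), and the slope/width computation is exactly \eqref{g1}.
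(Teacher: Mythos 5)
Your proposal is correct and follows essentially the same route as the paper: reduce to $\alpha = 0$ via an explicit $g \in SL(2,\mathbb{Z})$ with $g_{21} + \alpha g_{22} = 0$ (constructed here by Bézout, while the paper merely asserts existence with some auxiliary size bounds it doesn't actually need for this theorem), apply \eqref{g1} to get $g(F(\alpha,\beta)) = F(0, q\beta)$, and transfer Property~A back through $g$ using that the $SL(2,\mathbb{Z})$-action is an isometric algebra automorphism of $C(T^2)$ preserving positivity, frequencies, squared moduli, and closures. Your intermediate verification that $g(\mathfrak{U}_2(F)) = \mathfrak{U}_2(g(F))$ and $g(\mathfrak{T}_2^p(F-F)) = \mathfrak{T}_2^p(g(F)-g(F))$ is exactly the bookkeeping the paper compresses into its final sentence, and it is correct.
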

\begin{proof}
If $\alpha$ is rational and $|\alpha| \leq 1$ then there exists $g \in SL(2,\mathbb{Z})$
such that $g_{21}+\alpha g_{22} = 0,$ $|g_{21}| \leq |g_{22}|,$ and $|g_{11}| \leq |g_{12}| \leq |g_{22}|/2,$ so Equation \ref{g1} gives
$g(F(\alpha,\beta)) = F(0,\beta |g_{22}|).$
Therefore, if
$t \in \mathfrak{T}_{2}^{p}(F(\alpha,\beta)-F(\alpha,\beta))$
then
$g(t) \in \mathfrak{T}_{2}^{+}(F(0,\beta|g_{22}|)-F(0,\beta|g_{22}|))$
so Theorem \ref{main1} implies that $g(t) \in \mathfrak{U}_2(F(0,\beta|g_{22}|)),$
and hence
$t \in g^{-1}\left(\mathfrak{U}_2(F(0,\beta|g_{22}|))\right).$
Since the action is an algebra automorphism that preserves the maximum norm,
it follows that
$g^{-1}\left(\mathfrak{U}_2(F(0,\beta|g_{22}|))\right) = \mathfrak{U}_2(g^{-1}(F(0,\beta|g_{22}|)) =
\mathfrak{U}_2(F(\alpha,\beta)).$
\end{proof}
\begin{defin}
\label{amply}
$\alpha \in \mathbb{R}\backslash\mathbb{Q}$ is called \emph{amply approximable} if there exists
a sequence $g \in SL(2,\mathbb{Z})$ such that
$|g_{21}+\alpha g_{22}| \, |g_{22}| \, \log |g_{22}| \rightarrow 0.$
If $|\alpha| < 1$ then we can choose $g$ so that
$\max \{ |g_{11}|, |g_{12}|, |g_{21}|, |g_{22}| \} = |g_{22}|$
and the sequence $g$ is called $\alpha$-compatible.
\end{defin}
%
%
\begin{rem}
Davis \cite{Davis} proved that the inequality
$$
    |g_{21} + e g_{22}| < C \log (\log (|g_{22}|)) / (|g_{22}| \log (|g_{22}|)),
$$
where $g_{12}, g_{22}$ are relatively prime integers, has an infinite number of solutions for $C > 1/2$
and only a finite number of solutions for $C < 1/2.$ Therefore $e$ is not amply approximable.
For $\alpha \in \mathbb{R}\backslash\mathbb{Q}$ the Liouville-Roth constant $\mu_0(\alpha)$
is the supremum of the set of $\mu > 0$ for which there exist infinitely many
relatively prime integer pairs $(g_{21},g_{22})$ with
$|g_{12} + \alpha g_{22}| < |g_{22}|^{1-\mu}.$
$\alpha$ is a Liouville number if $\mu_0(\alpha) = \infty.$ The set of Liouville numbers is
the countable intersection of dense open sets \cite{Oxtoby}. Clearly $\mu(\alpha) > 2$ implies
that $\alpha$ is amply approximable but is a much stronger condition. Borwein and Borwein \cite{Borwein} proved that $\mu_0(e) = 2.$ If $\alpha$ is an irrational algebraic number, then the Thue-Siegal-Roth theorem implies that $\mu(\alpha) = 2$ and Lang \cite{Lang} conjectured that $\alpha$ satisfies the following stronger condition: for any $\epsilon > 0$ there exist only a finite number of relatively prime integer pairs $(g_{21},g_{22})$ with
$|g_{12} + \alpha g_{22}| < 1/(|g_{22}|\, (\log(|g_{22}|))^{1+\epsilon}).$
Clearly if $\alpha \in \mathbb{R}\backslash\mathbb{Q}$ and $\alpha$ does not satisfy Lang's condition, then $\alpha$ is amply approximable.
\end{rem}
\noindent The proof of our main theorem depends on the following observation which relates Diophantine approximation with the geometry of the lattice $\mathbb{Z}^2.$
\begin{lem}
\label{lattice}
    If $g_{21}, g_{22}$ are relatively prime, $\widetilde \alpha = -g_{21}/g_{22},$
    $0 < |\alpha| < 1,$ $\widetilde \beta \in (0,\beta),$ $(j,k) \in F(\widetilde \alpha,\widetilde \beta),$
    and $|j| < (\beta - \widetilde \beta) \, / \, |\alpha - \widetilde \alpha|,$ then $(j,k) \in F(\alpha,\beta).$
\end{lem}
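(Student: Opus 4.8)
The plan is to verify the defining inequality for $F(\alpha,\beta)$ directly, namely that $|k - j\alpha| < \beta$, by splitting $k - j\alpha$ into the piece controlled by the hypothesis $(j,k) \in F(\widetilde\alpha,\widetilde\beta)$ and the error piece coming from the difference between $\alpha$ and $\widetilde\alpha$. First I would write
\[
    k - j\alpha = (k - j\widetilde\alpha) + j(\widetilde\alpha - \alpha),
\]
so that by the triangle inequality
\[
    |k - j\alpha| \leq |k - j\widetilde\alpha| + |j|\,|\widetilde\alpha - \alpha|.
\]

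Next I would bound the two terms. The first term is $< \widetilde\beta$ because $(j,k) \in F(\widetilde\alpha,\widetilde\beta)$ by hypothesis. For the second term, the hypothesis $|j| < (\beta - \widetilde\beta)/|\alpha - \widetilde\alpha|$ — which makes sense since $\widetilde\beta < \beta$ forces the numerator to be positive, and $\alpha \neq \widetilde\alpha$ (as $\alpha$ is not forced to equal the rational $\widetilde\alpha$; indeed one should note $\alpha - \widetilde\alpha \neq 0$, which holds whenever $\alpha$ is irrational, the case of interest, or more generally is implicit since otherwise the bound on $|j|$ is vacuous) makes the denominator nonzero — gives $|j|\,|\widetilde\alpha - \alpha| < \beta - \widetilde\beta$. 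Adding the two estimates yields $|k - j\alpha| < \widetilde\beta + (\beta - \widetilde\beta) = \beta$, which is exactly the condition for $(j,k) \in F(\alpha,\beta)$.

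This argument is genuinely just the triangle inequality plus the two hypotheses, so there is no real obstacle; the only point requiring a word of care is that the quantity $(\beta - \widetilde\beta)/|\alpha - \widetilde\alpha|$ is well-defined and positive, i.e. that $\widetilde\beta \in (0,\beta)$ guarantees $\beta - \widetilde\beta > 0$ and that $\alpha \neq \widetilde\alpha$. The latter is harmless: if $\alpha = \widetilde\alpha$ then the hypothesis on $|j|$ would be interpreted as an empty constraint (no such $j$), or one simply observes the lemma is vacuous; in the application $\alpha$ will be irrational while $\widetilde\alpha = -g_{21}/g_{22}$ is rational, so they differ. I would state the one-line computation and close. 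The role of this lemma in the sequel is presumably to show that for a trigonometric polynomial $t$ with frequencies in $F(\widetilde\alpha,\widetilde\beta) - F(\widetilde\alpha,\widetilde\beta)$ of bounded $x$-degree, the spectral factors $S_N^{\pm}(t)$ — whose $x$-frequencies are bounded by $N$ — land inside $F(\alpha,\beta)$ once $N$ is small compared to $(\beta - \widetilde\beta)/|\alpha - \widetilde\alpha|$, which is where the rate estimate of Theorem \ref{Sconv} and the ample-approximability condition $|g_{21} + \alpha g_{22}|\,|g_{22}|\log|g_{22}| \to 0$ get coupled.
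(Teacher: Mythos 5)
Your proof is correct and is exactly the paper's argument: write $k - j\alpha = (k - j\widetilde\alpha) + j(\widetilde\alpha - \alpha)$, apply the triangle inequality, and add the two hypotheses' bounds. The extra remarks about $\alpha \neq \widetilde\alpha$ and vacuity are harmless but not needed; the paper states the one-line estimate without them.
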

\begin{proof}
Since $|k-\widetilde \alpha \, j| < \widetilde \beta$ therefore
$|k-\alpha \, j| \leq |k-\widetilde \alpha j| + |j|\, |\alpha - \widetilde \alpha| < \beta.$
\end{proof}
\begin{thm}
\label{main3}
If $\alpha$ is amply approximable then $F(\alpha,\beta)$ has property A.
\end{thm}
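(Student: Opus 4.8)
The plan is to mimic the proof of Theorem \ref{main2}, but since $\alpha$ is irrational no single $g \in SL(2,\mathbb{Z})$ sends $F(\alpha,\beta)$ to a horizontal strip $F(0,\cdot)$; instead I would use an $\alpha$-compatible sequence $g^{(m)}$ and approximate. Fix $t \in \mathfrak{T}_2^p(F(\alpha,\beta)-F(\alpha,\beta))$; I want to produce trigonometric polynomials in $\mathfrak{T}_2(F(\alpha,\beta))$ whose squared moduli converge uniformly to $t$. For each $m$ set $\widetilde\alpha_m = -g^{(m)}_{21}/g^{(m)}_{22}$, so $\widetilde\alpha_m$ is rational, $g^{(m)}(F(\widetilde\alpha_m,\widetilde\beta)) = F(0,\widetilde\beta|g^{(m)}_{22}|)$ for a suitable $\widetilde\beta \in (0,\beta)$, and $|\widetilde\alpha_m - \alpha| \to 0$. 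The difference set $F(\alpha,\beta)-F(\alpha,\beta)$ is fixed, and since $\widetilde\alpha_m \to \alpha$ with $\widetilde\beta < \beta$, for $m$ large the finitely many frequencies of $t$ all lie in $F(\widetilde\alpha_m,\widetilde\beta)-F(\widetilde\alpha_m,\widetilde\beta)$ (using Lemma \ref{lattice}), so $t \in \mathfrak{T}_2^p(F(\widetilde\alpha_m,\widetilde\beta)-F(\widetilde\alpha_m,\widetilde\beta))$.

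Now apply the rational case: $g^{(m)}(t) \in \mathfrak{T}_2^+(F(0,\widetilde\beta|g^{(m)}_{22}|)-F(0,\widetilde\beta|g^{(m)}_{22}|))$, so by Theorem \ref{main1}'s mechanism $g^{(m)}(t) = |S^+(g^{(m)}(t))|^2$ and the truncations $e_{0,-n_m}S^+_N(g^{(m)}(t))$, with $n_m$ the largest integer $<\widetilde\beta|g^{(m)}_{22}|$, have frequencies in $F(0,\widetilde\beta|g^{(m)}_{22}|)$ and squared moduli converging to $g^{(m)}(t)$ as $N \to \infty$. Pulling back by $(g^{(m)})^{-1}$, which preserves $\|\cdot\|_\infty$ and is an algebra automorphism, I get polynomials $u_{m,N} := (g^{(m)})^{-1}(e_{0,-n_m}S^+_N(g^{(m)}(t)))$ with $|u_{m,N}|^2 \to t$ as $N \to \infty$, and $\hbox{freq}(u_{m,N}) \subseteq (g^{(m)})^{-1}(F(0,\widetilde\beta|g^{(m)}_{22}|)) = F(\widetilde\alpha_m,\widetilde\beta)$. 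The trouble is that $F(\widetilde\alpha_m,\widetilde\beta)$ is \emph{not} contained in $F(\alpha,\beta)$ — it is an infinite strip along a slightly different slope — so $u_{m,N}$ need not be in $\mathfrak{T}_2(F(\alpha,\beta))$. This is exactly the obstruction Lemma \ref{lattice} is designed to handle: a frequency $(j,k) \in F(\widetilde\alpha_m,\widetilde\beta)$ lies in $F(\alpha,\beta)$ provided $|j| < (\beta-\widetilde\beta)/|\alpha-\widetilde\alpha_m|$, and the right-hand side tends to $\infty$ as $m \to \infty$.

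So the key quantitative step is to truncate $u_{m,N}$ in the $x$-variable to frequencies $|j| \le J_m$ with $J_m < (\beta-\widetilde\beta)/|\alpha-\widetilde\alpha_m|$, i.e.\ replace $u_{m,N}$ by its Dirichlet partial sum $D_{J_m} * u_{m,N}$ in $x$, and control the resulting error. By Theorem \ref{Sconv} applied to $g^{(m)}(t)$, the factor $S^+(g^{(m)}(t))$ is analytic in $x$ with analyticity width governed by $\rho(g^{(m)}(t))/n_1(g^{(m)}(t))$; here $n_1(g^{(m)}(t)) \le C|g^{(m)}_{22}|$ because $g^{(m)}$ is $\alpha$-compatible (all entries bounded by $|g^{(m)}_{22}|$), and $\rho(g^{(m)}(t))$ is bounded below independently of $m$ since $\|\widehat{g^{(m)}(t)}\|_1 = \|\widehat t\|_1$ and $\min\Re g^{(m)}(t) = \min\Re t > 0$ are $m$-independent. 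Hence the $x$-Fourier coefficients of $S^+_N(g^{(m)}(t))$ decay like $e^{-cj/|g^{(m)}_{22}|}$ up to polynomial factors, and $\|u_{m,N} - D_{J_m}*u_{m,N}\|_\infty$ is bounded (using Lemma \ref{kernels} for the $\log$ loss from the Dirichlet kernel and Lemma \ref{decay}) by roughly $\log(J_m)\, e^{-cJ_m/|g^{(m)}_{22}|}$. The ample approximability hypothesis $|g^{(m)}_{21}+\alpha g^{(m)}_{22}|\,|g^{(m)}_{22}|\log|g^{(m)}_{22}| \to 0$ means $|\alpha - \widetilde\alpha_m| = |g^{(m)}_{21}+\alpha g^{(m)}_{22}|/|g^{(m)}_{22}| = o(1/(|g^{(m)}_{22}|^2\log|g^{(m)}_{22}|))$, so one may take $J_m$ of order $|g^{(m)}_{22}|^2\log|g^{(m)}_{22}|$ (still $< (\beta-\widetilde\beta)/|\alpha-\widetilde\alpha_m|$ for large $m$) while $e^{-cJ_m/|g^{(m)}_{22}|} = e^{-c|g^{(m)}_{22}|\log|g^{(m)}_{22}|} \to 0$ dominates the $\log J_m$ growth. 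Thus for each $m$, choosing $N = N_m$ large enough and then truncating to $|j| \le J_m$, the polynomials $v_m := D_{J_m}*u_{m,N_m}$ satisfy $\hbox{freq}(v_m) \subseteq F(\alpha,\beta)$ and $\||v_m|^2 - t\|_\infty \to 0$; a careful bookkeeping of the two error sources (the $N$-truncation, controlled by Theorem \ref{Sconv}, and the $J$-truncation, controlled above) via a diagonal argument finishes the proof. The main obstacle is precisely this simultaneous control: ensuring $J_m$ is large enough (relative to the analyticity scale $|g^{(m)}_{22}|$) that the Dirichlet truncation error vanishes, yet small enough (relative to $1/|\alpha-\widetilde\alpha_m|$) that Lemma \ref{lattice} keeps the frequencies inside $F(\alpha,\beta)$ — the window between these two requirements is open exactly when $\alpha$ is amply approximable.
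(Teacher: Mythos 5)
Your high-level strategy is the same as the paper's---pull back via an $\alpha$-compatible sequence $g^{(m)}$, use Lemma~\ref{lattice} together with the $|\alpha-\widetilde\alpha_m|\,|g_{22}|^2\log|g_{22}|\to 0$ hypothesis to place the pulled-back frequencies in $F(\alpha,\beta)$, and use Theorem~\ref{Sconv} for the uniform error---so the right ingredients are all present. But you introduce an extra step that the paper does not need and whose justification has a gap, and that extra step obscures the key quantitative observation.

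The paper takes $N=N(\epsilon,g(t))$ from Theorem~\ref{Sconv} and observes that the resulting polynomial $s(g)=g^{-1}\bigl(e_{0,-n(g)}S^{+}_{N(\epsilon,g(t))}(g(t))\bigr)$ \emph{automatically} satisfies $n_1(s(g))=O(|g_{22}|^2\log|g_{22}|)$: indeed $e_{0,-n(g)}S^+_N(g(t))$ has $x$-degree $\le N=O(|g_{22}|\log|g_{22}|)$ and $y$-degree $\le 2n(g)=O(|g_{22}|)$, and applying $g^{-1}$ (all of whose entries are $\le|g_{22}|$) multiplies the degree by at most $|g_{22}|$. Since ample approximability forces $(\beta-\widetilde\beta)/|\alpha-\widetilde\alpha(g)|$ to grow faster than $|g_{22}|^2\log|g_{22}|$, Lemma~\ref{lattice} applies directly with no further truncation; the already-truncated $s(g)$ lies in $\mathfrak{T}_2(F(\alpha,\beta))$ and the error $\||s(g)|^2-t\|_\infty\le(2\|t\|_\infty+\epsilon)\epsilon$ comes straight from Theorem~\ref{Sconv}. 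You miss this automatic control of $n_1$, and instead bolt on a second truncation $D_{J_m}*$ in $x$, which requires a second error estimate.

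That second estimate is where the gap is. You write that the $x$-Fourier coefficients of $u_{m,N}$ decay like $e^{-cj'/|g_{22}|}$ and hence $\|u_{m,N}-D_{J_m}*u_{m,N}\|_\infty\lesssim\log(J_m)\,e^{-cJ_m/|g_{22}|}$. But the decay you established is for $S^{+}(g(t))$ in its own $x$-variable, whose width is $\sigma_1(g(t))=\rho(g(t))/n_1(g(t))\sim c/|g_{22}|$. Pulling back by $(g^{(m)})^{-1}$ sends a frequency $(j,k)$ of $S^+_N(g(t))$ to $(j',k')$ with $j'=g_{22}j-g_{12}k$, so the new $x$-index $j'$ is roughly $|g_{22}|$ times the old one. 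Consequently the decay in $j'$ is of the order $e^{-cj'/|g_{22}|^2}$, not $e^{-cj'/|g_{22}|}$. With $J_m\asymp|g_{22}|^2\log|g_{22}|$ this gives only a polynomial decay $|g_{22}|^{-cC}$, not the super-exponential $e^{-c|g_{22}|\log|g_{22}|}$ you claim, and whether the exponent $cC$ is large enough to swamp the $\log J_m$ factor depends on constants ($c$ coming from $\rho(t)$) that you have not controlled. So as written the $J$-truncation step is both unnecessary and not established. The fix is to drop it and instead record, as the paper does, that $n_1(s(g))=O(|g_{22}|\,N(\epsilon,g(t)))=O(|g_{22}|^2\log|g_{22}|)$ is already below the Lemma~\ref{lattice} threshold.
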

\begin{proof}
Since $\alpha$ is amply approximable iff $1/\alpha$ is, we may assume that
$|\alpha| < 1.$
Assume that $\epsilon > 0$ and
$t \in \mathfrak{T}_{2}^{p}(F(\alpha,\beta)-F(\alpha,\beta)).$
Choose $\widetilde \beta \in (0,\beta)$ with
$t \in \mathfrak{T}_{2}^{p}(F(\alpha,\widetilde \beta)-F(\alpha,\widetilde \beta))$
and choose an $\alpha$-compatible sequence $g$ in $SL(2,\mathbb{Z}).$
Construct sequences $\widetilde \alpha(g) = -g_{21}/g_{22}$ and
$n(g) = $ largest integer $< \widetilde \beta |g_{22}|.$
Since $\widetilde \alpha(g) \rightarrow \alpha$ we may assume
$t \in \mathfrak{T}_{2}^{p}(F(\widetilde \alpha(g),\widetilde \beta)-F(\widetilde \alpha(g),\widetilde \beta)),$
and hence
$g(t) \in \mathfrak{T}_{2}^{p}(F(0,\widetilde \beta \, |g_{22}|)-F(0,\widetilde \beta |g_{22}|)).$
Construct sequences $s(g)$ in $\mathfrak{T}_{2}$ and $a(g)$ in $\mathbb{Z}_{+}$ by
\begin{equation}
\label{sg}
    s(g) = g^{-1}\left( e_{0,-n(g)} \, S_{N(\epsilon,g(t))}^{+}(g(t)) \right),
\end{equation}
\begin{equation}
\label{ag}
    a(g) = n_1(s(g))/(|g_{22}|^2 \log |g_{22}|).
\end{equation}
Then $s(g) \in \mathfrak{T}_2(F(\widetilde \alpha(g),\widetilde \beta)).$
Since
$\rho(g(t)) = \rho(t)$ and $\zeta(g(t)) = \zeta(t),$
$N(\epsilon,g(t)) \approx O(|g_{22}| \log (|g_{22}|)),$
and (\ref{sg}) gives
$n_1(s(g)) \approx O(|g_{22}|\, N(\epsilon,g(t)).$
%
%
Therefore
$n_1(s(g)) \approx O(|g_{22}|^2 \log (|g_{22}|)),$
so $a(g)$ is bounded.
Since $g$ is $\alpha$-compatible, for sufficiently large $|g_{22}|,$
$n_1(s(g)) < (\beta - \widetilde \beta)/|\alpha - \widetilde \alpha(g)|.$
Therefore Lemma \ref{lattice} implies that
$s(g) \in \mathfrak{T}_2(F(\alpha,\beta)).$
Theorem \ref{Sconv} implies that
$|| \, t - |s(g)|^2 \, ||_{\infty} \leq (2||t||_{\infty} + \epsilon)\, \epsilon.
$
Since $\epsilon$ was arbitrarily small it follows that
$t \in \mathfrak{U}_2(F(\alpha,\beta))$
so $F(\alpha,\beta)$ has property A.
\end{proof}
\begin{rem}
A result of Erd\"{o}s and Turan \cite{Erdos} implies that the roots of the Laurent polynomials associated to $g(t)$ become more equidistributed near $T_c$ as $|g_{22}|$ increases. Then results of Amoroso and Mignotte \cite{Amoroso} might be used to sharpen the upper bound for $||\Psi^{+}(g(t))||_{\infty}.$ This would show that $S_{N}^{+}(g(t))$ converges faster that shown in this paper and possibly give a proof that $F(\alpha,\beta)$ has property A under
weaker conditions on $\alpha$ than the amply approximable condition.
\end{rem}

Address: Wayne Lawton, 135/125 Laksi, Bangkok 10210, Thailand

\end{document}